\numberwithin{equation}{section}
\theoremstyle{plain}
 \newtheorem{theorem}{Theorem}[section]
 \newtheorem{lemma}[theorem]{Lemma}
 \newtheorem{corollary}[theorem]{Corollary}
 \newtheorem{proposition}[theorem]{Proposition}
\theoremstyle{definition}
 \newtheorem{definition}[theorem]{Definition}
 \newtheorem{example}[theorem]{Example}
 \newtheorem{remark}[theorem]{Remark}
\newcommand{\bN}{\mathbb{N}}
\newcommand{\bZ}{\mathbb{Z}}
\newcommand{\bR}{\mathbb{R}}
\newcommand{\bC}{\mathbb{C}}
\newcommand{\bE}{\mathbb{E}}
\newcommand{\cov}{\mbox{\rm cov}\,}
\newcommand{\one}{\mathbf{1}}
\begin{document}

\vspace{5mm}
\begin{center}
{\bf
{\large
Central Limit Theorems for Moving Average Random Fields with Non-Random and Random Sampling}}

\vspace{5mm}

David Berger\\
\end{center}
\bigskip

{\bf Abstract.}
For a L\'evy basis $L$ on $\bR^d$ and a suitable kernel function $f:\bR^d \to \bR$, consider the continuous spatial moving average field $X=(X_t)_{t\in \bR^d}$ defined by $X_t = \int_{\bR^d} f(t-s) \, dL(s)$. Based on observations on finite subsets $\Gamma_n$ of $\bZ^d$, we obtain central limit theorems for the sample mean and the sample autocovariance function of this process. We allow sequences $(\Gamma_n)$ of deterministic subsets of $\bZ^d$ and of random subsets of $\bZ^d$. The results generalise existing results for time indexed stochastic processes (i.e. $d=1$) to random fields with arbitrary spatial dimension $d$, and additionally allow for random sampling. The results are  applied to obtain a consistent and asymptotically normal estimator of $\mu>0$ in the stochastic partial differential equation $(\mu - \Delta) X = dL$ in dimension 3, where $L$ is L\'evy noise.

\section{Introduction}
Many statistical models with more than one spatial dimension are described by a linear stochastic partial differential equation with some additive noise, which means that we have a random field $X$ on $\bR^d$ satisfying
\begin{align}\label{eqspde}
\mathcal{L}(\mu)X=dL,
\end{align}
where $\mathcal{L}(\mu)$ is a linear partial differential operator depending on some parameter $\mu$ and $dL$ denotes some noise, for example Gaussian or stable noise. If $\mathcal{L}(\mu)$ has an integrable fundamental solution $G_{\mu}$ the mild solution of $(\ref{eqspde})$ can be written as
\begin{align}\label{eq1000}
X_t=\int\limits_{\bR^d}G_{\mu}(t-s)dL(s),
\end{align}
where $dL$ denotes the additive noise, see for example  [\ref{Barndorff}], [\ref{Berger}], [\ref{Pham}] and [\ref{Walsh}]. The solution $(\ref{eq1000})$ is a so called continuous moving average random field.\\
The additive noise $dL$ studied in this paper will be a L\'{e}vy white noise, where the Gaussian white noise and stable noise are included. A detailed study of L\'{e}vy white noise can be found in [\ref{Fageot1}], where it is also shown that a L\'{e}vy white noise defines a L\'{e}vy basis in the sense of Rajput and Rosinski [\ref{Rajput}].
Random fields of the form
\begin{align}\label{eq1000A}
X_t=\int\limits_{\bR^d} f(t-s)\, dL(s),
\end{align}
with a suitable kernel function $f:\bR^d\to \bR$ and a L\'evy basis $L$ on $\bR^d$ (as in \eqref{eq1000} with $f=G_\mu$) can be seen as a continuous 
and spatial extension of the discrete time moving average processes $Z=(Z_t)_{t\in \bZ},$ defined by
\begin{align} \label{eq1000B}
Z_t=\sum\limits_{k\in\bZ} a_{t-k}W_k,
\end{align}
where $(W_k)_{k\in \bZ}$ is an independent and identically distributed sequence and $a_k$, $k\in \bZ$, are real coefficients.

In many cases one is interested in estimating the parameter $\mu$ of the equation $(\ref{eqspde})$. If we know how the fundamental solution $G_{\mu}$ depends on the parameter $\mu$, it is sometimes possible to give moment estimators for $\mu$. Of particular interest are estimators of the mean $\bE (X_t)$ and the autocovariance $\cov (X_t , X_{t+h})$ for $t,h\in \bR^d$. In most applications 
only discrete spatial data is available, for example observations based on a finite subset $\Gamma_n$ of the regular grid $\bZ^d$. A natural estimator for $\bE X_t$ is then the sample mean $\frac{1}{|\Gamma_n|} \sum_{s\in \Gamma_n} X_s$, while a natural estimator for the autocovariance $\cov (X_t , X_{t+h})$ is the (adjusted) sample autocovariance \begin{align} \label{eq1000C}
\gamma_n^* (h) := \frac{1}{|\Gamma_n|} \sum_{s\in \Gamma_n} X_s X_{s+h}, \quad h\in \bZ^d
\end{align}
(assuming that the L\'evy basis and hence $X$ have mean zero and that for each $s\in \Gamma_n$, both $X_s$ and $X_{s+h}$ are observed). Motivated by this, in this paper we will provide central limit theorems for the sample mean and sample autocovariance function as defined in \eqref{eq1000C} for continuous spatial moving average random fields as defined in \eqref{eq1000A} (equivalently, \eqref{eq1000}), when  the kernel function $f$ decays sufficiently fast and the L\'evy basis has finite variance or finite fourth moment and mean zero, respectively. The sampling sequence $(\Gamma_n)_{n\in \bN}$ will be  a nested sequence of finite subsets of $\bZ^d$ satisfying $|\Gamma_n|\to \infty$ and some extra conditions, and it will be either a sequence of deterministic subsets (referred to as non-random sampling) or a sequence of random subsets (referred to as random-sampling), more precisely of the form $\Gamma_n = \{ t \in [-n,n)^d \cap \bZ^d | Y_t =1\}$, where $(Y_t)_{t\in \bZ^d}$ is a $\{0,1\}-$valued stationary ergodic random field on $\bZ^d$. In the case of non-random sampling, we will need slightly higher moment conditions on the L\'evy basis.

Central limit theorems for the sample mean and the sample autocovariance of \eqref{eq1000B} are classic and can be found e.g.~in Chapter 7 of the book \cite{Brockwell} (for $d=1$). On the other hand, central limit theorems for L\'evy driven moving average processes based on discrete low-frequency observations have only recently attracted attention, and this also only in dimension $d=1$, i.e. for continuous time series and not spatial data. In \cite{Lindner}, the asymptotics of the sample mean and sample autocovariance are studied when $f$ decays sufficiently fast and $L$ has finite second or fourth moment, respectively. \cite{Spangenberg} studies the situation when $f$ decays slowly leading to a long-memory process $X$, while \cite{Drapatz} considers the heavy tailed situation when the L\'evy process $L$ is in the domain of attraction of a stable non-normal distribution, and in \cite{Brandes} the case of random sampling when the process $X$ is sampled at a renewal sequence is treated. Observe that all these results are in dimension $d=1$ only. The results of this paper can be seen as a generalization of the results of \cite{Lindner}, who have $d=1$  and $\Gamma_n =\{1,2, \ldots, n\}$, to arbitrary spatial dimensions $d\in \bN$ and more general sets $\Gamma_n$, and additionally allowing random sampling as described above.

The paper is organized as follows. In the next section, we fix notation and recall the notion of L\'evy bases. Then, in Section \ref{S3}, we state the main results of the present paper. 
These are central limit theorems for the sample mean as described above for non-random and random sampling (Theorems \ref{marc} and \ref{extra}, respectively), and central limit theorems for the sample autocovariance as described above for non-random and random sampling (Theorems \ref{sophie3} and \ref{extra1}, respectively). In Section \ref{S4} we apply the results to a random field given as a solution as in \eqref{eqspde}, more specifically, we consider the stochastic partial differential equation
$$(\mu - \Delta) X = dL$$
in dimension $d=3$, where $\Delta$ denotes the Laplace operator, and obtain a  consistent and asymptotically normal estimator of $\mu>0$ based on the sample mean. Finally, Sections \ref{sectionproof1} and \ref{sectionproof2} contain the proofs of the main theorems for the sample mean and the sample autocovariance,  respectively.

\section{Notation and Preliminaries} \label{S2}
To fix notation, by a distribution on $\bR$ we mean a probability measure on $(\bR,\mathcal{B}(\bR))$ with $\mathcal{B}(\bR)$ being the Borel $\sigma-$algebra on $\bR$. By a measure on $\bR^d$, $d$ a natural number, we always mean a positive measure on $(\bR^d,\mathcal{B}(\bR^d))$. The set $\mathcal{B}_b(\bR^d)$ is the set of all bounded Borel measurable sets. The Dirac measure at a point $b\in \bR$ will be denoted by $\delta_{b}$, the Gaussian distribution with mean $a\in\bR$ and variance $b\ge 0$ by $N(a,b)$ and the Lebesgue measure by $\lambda^d$ on $\bR^d$. If a random vector $X$ has law $\mathcal{L}$ we write $X\sim \mathcal{L}$. Weak convergence of measures will be denoted by "$\stackrel{d}{\to}$". We write $\bN=\{1,2,\dotso\}$, $\bN_0=\bN\cup \{0\}$ and $\bZ,\,\bR$ for the set of integers and real numbers  respectively. The indicator function of a set $A\subset \bR$ is denoted by $\one_A$. By $L^p(\bR^d, A)$ for $1\le p<\infty$ and $A\subset \bC$ we denote the set of all Borel-measurable functions $f:\bR^d \to A$ such that $\int_{\bR^d} |f(x)|^p\,\lambda^d(dx)<\infty$. If $A=\bR$ we simply write $L^p(\bR^d)$. For two different sets $A,B\subset\bR^d$, we denote  $dist(A,B):=\inf\{\|x-y\|:x\in A\textrm{ and }y\in B\}$, where $\|\cdot\|$ is the euclidean norm. We write `a.e.' to denote almost everywhere and `a.s.'  to denote almost surely. $|A|$ denotes the number of elements of the set $A$.\\
We are interested in integrals of the form $\int_{\bR^d} f(u)\,dL(u)$, where $dL$ denotes the integration over a L\'{e}vy basis. A L\'{e}vy basis can be understood in the following way:
\begin{definition}[see {[\ref{Rajput}, p. 455]}]\label{randommeasures}
A \emph{L\'{e}vy basis} is family $(L(A))_{A\in\mathcal{B}_b({\bR^d})}$ of real valued random variables such that
\begin{itemize}
\item[i)]$L(\bigcup_{n=0}^\infty A_n)=\sum_{n=0}^\infty L(A_n)$ $a.s.$ for pairwise disjoint sets $(A_n)_{n\in\bN_0}\subset \mathcal{B}_b(\bR^d)$ with $\bigcup_{n\in\bN_0}A_n\in \mathcal{B}_b(\bR^d)$,
\item[ii)] $L(A_i)$ are independent for pairwise disjoint sets $A_1,\dotso,A_n\in \mathcal{B}_b(\bR^d)$ for every $n\in\bN$,
\item[iii)] there exist $a\in [0,\infty)$, $\gamma \in \bR$ and a L\'{e}vy measure $\nu$ on $\bR$ (i.e. a measure $\nu$ on $\bR$ such that $\nu(\{0\})=0$ and $\int\limits_{\bR} \min\{1,x^2\}\nu(dx)<\infty$) such that
\begin{align*}
\bE e^{iz L(A)}=\exp\left(\psi(z)\lambda^d(A)\right)
\end{align*}
for every $A\in \mathcal{B}_b(\bR^d)$, where
\begin{align*}
\psi(z):=i\gamma z-\frac{1}{2}az^2+\int\limits_{\bR} (e^{ixz}-1-ixz\one_{[-1,1]}(x))\nu(dx),\quad z\in \bR.
\end{align*}
The triplet $(a,\nu ,\gamma)$ is called the \emph{characteristic triplet} of $L$ and $\psi$ its \emph{characteristic exponent}. By the L\'{e}vy-Khintchine formula, $L(A)$ is then infinitely divisible. 
\end{itemize}
\end{definition}
It can be shown that the characteristic triplet is unique; conversely, to every $a\in [0,\infty)$, $\gamma \in \bR$ and L\'{e}vy measure $\nu$ there exists a L\'{e}vy basis with $(a,\nu,\gamma)$ as characteristic triplet.
It follows from the general theory of infinitely divisible distributions that for a L\'{e}vy basis $L$ with characteristic triplet $(a,\nu,\gamma)$ and $p\in [1,\infty)$, we have $\int\limits_{|x|>1}|x|^p \nu(dx)<\infty$ if and only if $\bE |L(A)|^p<\infty$ for some (equivalently, all) $A\in \mathcal{B}_b(\bR^d)$ with $\lambda^d(A)>0$. In that case,
\begin{align*}
\bE L(A)=\lambda^d(A) \bE L([0,1]^d).
\end{align*}
Integration of deterministic functions with respect to L\'{e}vy bases is described by \\Rajput and Rosinski [\ref{Rajput}]; in particular for \emph{simple functions} $f$ of the form $f=\sum\limits_{j=1}^n x_j\one_{A_j}$ with $x_j\in\bR$ and $A_j\in\mathcal{B}_b(\bR^d)$, the integral $\int\limits_{A}f(u)dL(u)$ for $A\in\mathcal{B}(\bR^d)$ is defined as $\sum\limits_{j=1}^n x_j L(A_j\cap A)$. A general Borel-measurable function $f:\bR^d\to \bR$ is called \emph{integrable with respect to $L$}, if there exists a sequence of simple functions $(f_n)_{n\in\bN}$ such that $f_n\to f$ $\lambda^d-$a.e. and such that $\int\limits_{A} f_n(u)dL(u)$ converges in probability as $n\to \infty$ for every $A\in \mathcal{B}(\bR^d)$, in which case this limit is denoted by $\int\limits_A f(u)dL(u)$, see [\ref{Rajput}, p.460]. Rajput and Rosinski also characterize integrability of functions. In particular, if $f\in L^1(\bR^d)\cap L^2(\bR^d)$ and $\bE L([0,1]^d)^2<\infty$, or if $f\in L^2(\bR^d)$, $\bE L([0,1]^d)^2<\infty$ and $\bE L([0,1]^d)=0$, then the integral $\int_{\bR^d} f(u)dL(u)$ is well-defined and satisfies $\bE \left(\int_{\bR^d} f(u)dL(u)\right)^2<\infty$. This follows by standard calculations. Moreover, for two such functions $f,g$ we have
\begin{align}\label{Ito}
\cov\left( \,\,\int\limits_{\bR^d}f(u)dL(u),\int\limits_{\bR^d}g(u)dL(u)\right)=\sigma^2 \int\limits_{\bR^d}f(u)g(u) \lambda^d(du),
\end{align}
where $\sigma^2 =\bE L([0,1]^d)^2$. For a stationary random field $X=(X_t)_{t\in\bR^d}$ with finite second moment we write $\gamma_X(t):=\cov(X_t,X_0)$.
\vspace{5mm}

\section{Main results}\label{sectionmain} \label{S3}
In this section, we formulate our main results. Our sampling grid will always be $\bZ^d$, but observe that every result can be extended to the sampling set $\Delta A\bZ^d=\{\Delta Av: v\in\bZ^d\}$, where $A$ is an orthogonal $d\times d$-matrix and $\Delta>0$, because the L\'{e}vy basis is invariant (in distribution) under orthogonal transformations and any scale transformation can be applied to the L\'{e}vy basis instead to the lattice by transporting the scaling parameter to the triplet $(a,\gamma,\nu)$.
Our sampling sets $\Gamma_n$ will then be subsets of $\bZ^d$. The process under consideration is given by $X_t=\int\limits_{\bR^d}f(t-s)dL(s)$, where $f:\bR^d\to \bR$ is integrable with respect to the L\'{e}vy basis $L$. By homogeneity of the L\'{e}vy basis, it is easy to see that $(X_t)_{t\in \bR^d}$ is a strictly stationary random field, meaning that its finite dimensional distributions are shift invariant.\\
The proof of Theorem \ref{marc} and Theorem \ref{extra} are in Section \ref{sectionproof1} and the proofs of Theorem \ref{sophie3} and Theorem \ref{extra1} in Section \ref{sectionproof2}.
\subsection{Central limit theorems for the sample mean}
In this and the next section, we give central limit theorems (CLTs) for the sample mean. 
\begin{theorem}\label{marc}
Let $L$ be a L\'{e}vy basis with $\bE (L([0,1]^d)^2<\infty$ and $f\in L^1(\bR^d)\cap L^2(\bR^d)$, and let 
\begin{align*}
X_t:= \int\limits_{\bR^d} f(t-u)dL(u),\quad t\in\bR^d.
\end{align*}
Let $(\Gamma_n)_{n\in \bN}$ be a sequence of finite subsets of $\bZ^d$ such that
\begin{itemize}
\item[a)] $\Gamma_n \subset \Gamma_{n+1}$ for every $n\in\bN$,
\item[b)] $|\Gamma_n|\to \infty$ as $n\to \infty$, and 
\item[c)] $a_l^n:=\frac{|\{(t,s)\in \Gamma_n \times \Gamma_n: t-s=l\}|}{|\Gamma_n|}$ converges as $n\to \infty$ to some $a_l$ for each $l\in \bZ^d$.
\end{itemize}
Assume that
\begin{align}\label{eq7766}
\sum\limits_{t\in \bZ^d} \sup\limits_{n\in \bN} a_t^n \int\limits_{\bR^d} |f(-u)f(t-u)|\lambda^d(du)<\infty.
\end{align}
Then
\begin{align*}
\sum\limits_{t\in \bZ^d} a_t|\cov(X_t,X_0)|<\infty,
\end{align*}
and
\begin{align*}
\frac{1}{\sqrt{|\Gamma_n|}} \sum\limits_{t\in \Gamma_n} \left(X_t- \bE L([0,1]^d) \int\limits_{\bR^d} f(u) \lambda^d(du)\right)\stackrel{d}{\to} N\left(0,\sum\limits_{t\in \bZ^d} a_t\cov(X_t,X_0)\right).
\end{align*}
\end{theorem}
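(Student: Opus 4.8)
The plan is to combine three ingredients: the representation of the sum as a single stochastic integral against $L$, a truncation of the kernel that renders the integrand uniformly negligible, and a variance-based approximation argument, with the summability hypothesis \eqref{eq7766} serving throughout as an $n$-independent dominating bound. I begin with the elementary reductions. By \eqref{Ito}, $\cov(X_t,X_0)=\sigma^2\int_{\bR^d}f(t-u)f(-u)\,\lambda^d(du)$, so $|\cov(X_t,X_0)|\le\sigma^2\int_{\bR^d}|f(-u)f(t-u)|\,\lambda^d(du)$; since $a_t=\lim_n a_t^n\le\sup_n a_t^n$, the first assertion $\sum_t a_t|\cov(X_t,X_0)|<\infty$ is immediate from \eqref{eq7766}. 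Since $\bE X_t=\bE L([0,1]^d)\int_{\bR^d}f\,\lambda^d$, the centring constant is exactly $\bE X_t$, and by linearity $\sum_{t\in\Gamma_n}(X_t-\bE X_t)=\int_{\bR^d}g_n\,dL-\bE\int_{\bR^d}g_n\,dL$ with $g_n(u):=\sum_{t\in\Gamma_n}f(t-u)$. Writing $S_n$ for the normalised centred sum, \eqref{Ito} with $f=g=g_n$ gives $\var(S_n)=\sigma^2|\Gamma_n|^{-1}\int g_n^2=\sigma^2\sum_l a_l^n\rho(l)$, where $\rho(l):=\int f(v)f(v-l)\,\lambda^d(dv)=\sigma^{-2}\cov(X_l,X_0)$; by condition (c) and dominated convergence (the summand is dominated by $\sup_n a_l^n\,|\rho(l)|$, summable by \eqref{eq7766}) this tends to $\eta^2:=\sum_l a_l\cov(X_l,X_0)$, the target variance. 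Here $\sigma^2=a+\int x^2\,\nu(dx)=\var(L([0,1]^d))$ is the constant in \eqref{Ito}, finite because $\bE L([0,1]^d)^2<\infty$.

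The core step is a truncation. For $M>0$ set $f^{(M)}:=f\,\one_{[-M,M]^d}\,\one_{\{|f|\le M\}}$ and $X_t^{(M)}:=\int f^{(M)}(t-u)\,dL(u)$, with associated $g_n^{(M)}$ and normalised centred sum $S_n^{(M)}$. Truncating both the support and the size ensures that, for each fixed $u$, at most $(2M+2)^d$ lattice translates contribute to $g_n^{(M)}(u)$, each of modulus at most $M$, so $\sup_{n,u}|g_n^{(M)}(u)|\le C_M<\infty$ and hence $\sup_u|g_n^{(M)}(u)|/\sqrt{|\Gamma_n|}\to0$. Since $\sum_{t\in\Gamma_n}X_t^{(M)}$ is again a single integral, its centred characteristic function equals $\exp\big(\int_{\bR^d}\psi_0(z\,g_n^{(M)}(u)/\sqrt{|\Gamma_n|})\,\lambda^d(du)\big)$, where $\psi_0(w):=\psi(w)-iw\,\bE L([0,1]^d)=-\tfrac12 aw^2+\int(e^{ixw}-1-ixw)\,\nu(dx)$ satisfies $\psi_0(w)=-\tfrac12\sigma^2 w^2+R(w)$ with $|R(w)|\le\int\min(|xw|^3/6,|xw|^2)\,\nu(dx)$. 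Using the uniform smallness of $g_n^{(M)}/\sqrt{|\Gamma_n|}$ together with $\int x^2\,\nu(dx)<\infty$, I would split the $\nu$-integral at $|x|\le R$ and $|x|>R$ (cubic bound on the former, quadratic on the latter, then $n\to\infty$ and $R\to\infty$) to show $\int_{\bR^d}|R(z\,g_n^{(M)}/\sqrt{|\Gamma_n|})|\,\lambda^d\to0$. Combined with $\var(S_n^{(M)})\to\eta_M^2:=\sigma^2\sum_l a_l\rho^{(M)}(l)$, this yields $S_n^{(M)}\stackrel{d}{\to}N(0,\eta_M^2)$, where $\rho^{(M)}(l):=\int f^{(M)}(v)f^{(M)}(v-l)\,\lambda^d(dv)$.

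Finally I would let $M\to\infty$. Since $|f-f^{(M)}|\le|f|$, dominated convergence in $L^2(\bR^d)$ gives $f^{(M)}\to f$, hence $\rho^{(M)}(l)\to\rho(l)$ for each $l$, dominated by $\tilde\rho(l):=\int|f(v)f(v-l)|\,\lambda^d(dv)$; as $\sum_l a_l\tilde\rho(l)\le\sum_l\sup_n a_l^n\,\tilde\rho(l)<\infty$ by \eqref{eq7766}, dominated convergence gives $\eta_M^2\to\eta^2$. Moreover $\cov(X_t-X_t^{(M)},X_s-X_s^{(M)})=\sigma^2\rho^{(M,c)}(t-s)$ with $\rho^{(M,c)}(l):=\int(f-f^{(M)})(v)(f-f^{(M)})(v-l)\,\lambda^d(dv)$, so $\var(S_n-S_n^{(M)})=\sigma^2\sum_l a_l^n\rho^{(M,c)}(l)$; since $|\rho^{(M,c)}(l)|\le\tilde\rho(l)$ and $\rho^{(M,c)}(l)\to0$ as $M\to\infty$, dominated convergence yields $\limsup_n\var(S_n-S_n^{(M)})\to0$. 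Passing from this variance bound to $\limsup_n P(|S_n-S_n^{(M)}|>\varepsilon)\to0$ via Chebyshev and invoking the standard approximation theorem for weak convergence (e.g.\ Proposition 6.3.9 in \cite{Brockwell}) then upgrades the limits $N(0,\eta_M^2)$ to $S_n\stackrel{d}{\to}N(0,\eta^2)$.

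I expect the principal obstacle to be this last step, namely controlling $\var(S_n-S_n^{(M)})$ uniformly in $n$ as $M\to\infty$: this is exactly where \eqref{eq7766} is indispensable, supplying the summable $n$-independent majorant $\sup_n a_l^n\,\tilde\rho(l)$ needed to interchange $\limsup_n$ with the limit $M\to\infty$. The remainder estimate $\int|R(z\,g_n^{(M)}/\sqrt{|\Gamma_n|})|\,\lambda^d\to0$ in the truncated step is the other delicate point, but it becomes routine once the uniform smallness of the integrand is secured by truncating $f$ in both support and size.
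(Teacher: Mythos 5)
Your proposal is correct, and its outer skeleton (truncate the kernel, prove a CLT for the truncated field, control $\var(S_n-S_n^{(M)})$ via \eqref{eq7766}, and conclude with the approximation theorem [\ref{Brockwell}, Proposition 6.3.9]) coincides with the paper's. The core CLT step, however, is genuinely different. The paper truncates only the support of $f$, observes that $X^{(h)}_t=\int f(t-u)\one_{[-h,h)^d}(t-u)\,dL(u)$ is an $m$-dependent stationary random field on $\bZ^d$, and invokes Heinrich's CLT for $m$-dependent fields [\ref{Heinrich}, Theorem 2], checking only second-moment/Lindeberg conditions. You instead truncate $f$ in both support and size, collapse $\sum_{t\in\Gamma_n}X_t^{(M)}$ into the single integral $\int g_n^{(M)}\,dL$, and compute the characteristic function explicitly through the L\'evy exponent, with a Lindeberg-type splitting of the remainder $R(w)=\int(e^{ixw}-1-ixw+\tfrac12x^2w^2)\,\nu(dx)$ made uniform by the bound $\sup_u|g_n^{(M)}(u)|/\sqrt{|\Gamma_n|}\to 0$; this remainder estimate does go through as you sketch it, and your identification of $\sigma^2=a+\int x^2\,\nu(dx)$ as the correct constant in \eqref{Ito} is accurate. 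What your route buys is a self-contained, elementary argument that needs no external dependent-CLT and exposes exactly where the infinite divisibility enters; what it costs is that it is tied to the linearity of $\sum_t X_t$ in $L$ and would not transfer to Theorem \ref{sophie3}, where the summands $X_sX_{s+\Delta}$ are quadratic in $L$ and the paper's $m$-dependence argument is reused essentially verbatim. Both proofs use \eqref{eq7766} in the identical way, as the $n$-uniform summable majorant $\sup_n a_l^n\,\tilde\rho(l)$ that justifies the interchange of $n\to\infty$ with the de-truncation limit.
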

\begin{remark}
From the definition of $a_l^n$ it is obvious that $0\le a_l^n\le 1$, hence necessarily also $a_l \in [0,1]$ for each $l\in \bN$.\\
A sufficient condition for (\ref{eq7766}) to  hold is hence that
\begin{align*}
\sum\limits_{t \in \bZ^d} \int\limits_{\bR^d} |f(-u)f(t-u)|\lambda^d(du)<\infty.
\end{align*}
Denoting
\begin{align*}
F(u):=\sum\limits_{t\in \bZ^d} |f(u+t)|,\quad u\in\bR^d,
\end{align*}
it is easy to see that $F$ is periodic and that 
\begin{align*}
\sum\limits_{t\in \bZ^d} \int\limits_{\bR^d} |f(-u)f(t-u)|\lambda^d(du)&=\int\limits_{\bR^d} |f(u)|F(u)\lambda^d(du)\\
&=\int\limits_{[0,1]^d} \sum\limits_{t\in \bZ^d}|f(u+t)|F(u)\lambda^d(du)\\
&=\int\limits_{[0,1]^d} F(u)^2 \lambda^d(du),
\end{align*}
so that $F\in L^2([0,1]^d)$ is a sufficient condition for $(\ref{eq7766})$ to hold.
Observe however that there are also other cases when (\ref{eq7766}) holds but  $F\notin L^2([0,1]^d)$. For example, when the sets $\Gamma_n$ are contained in some hyperplane of $\bR^d$, then many of the $a_l^n$ will be $0$.
\end{remark}

\begin{example}
Let $\Gamma_n=(-n,n]^d\cap \bZ^d$. Then it is clear that $a_l^n$ in Theorem \ref{marc} will converge to $1$ as $n\to \infty$ for each $l\in \bZ^d$. Sequences that satisfy $\lim_{n\to\infty}a_l^n=1$ for each $l$ are called \emph{F{\o}lner}. They play an important role in ergodic theorems in the theory of amenable groups, see [\ref{Lindenstrauss}]. 
\end{example}

Another example of sequences $(\Gamma_n)$ satisfying the assumptions of Theorem \ref{marc} can be obtained as realisations of certain random subsets, in which also the limits $a_l$ may be non-trivial (i.e. different from $0$ or $1$). This follows from the next lemma, where we use the concept of ergodicity on $\bZ^d$, see [\ref{Tempelman}, Definition 1.1, p. 52].
\begin{lemma}\label{lemmaergodicsampling}
Let $(Y_t)_{t\in\bZ^d}$ be a $\{0,1\}-$valued stationary ergodic random field such that $\bE Y_0\neq 0$ (i.e. $P(Y_0=0)<1$). We define
\begin{align*}
\Gamma_n:=\{ t\in [-n,n)^d\cap \bZ^d \,:\, Y_t=1\}.
\end{align*}
Then $(\Gamma_n)_{n\in\bN}$ satifies
\begin{align*}
\frac{\{(t,s)\in \Gamma_n\times \Gamma_n: t-s=l\}}{|\Gamma_n|}\to \frac{\bE Y_lY_0}{\bE Y_0}\qquad\textrm{a.s. for }n\to\infty.
\end{align*}
Especially, $(\Gamma_n)_{n\in\bN}$ satisfies almost surely the assumptions of Theorem \ref{marc}.
\end{lemma}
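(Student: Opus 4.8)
The plan is to reduce everything to the multiparameter pointwise ergodic theorem for the shift action of $\bZ^d$. Write $B_n := [-n,n)^d \cap \bZ^d$, so that $|B_n| = (2n)^d$ and $\Gamma_n = \{t \in B_n : Y_t = 1\}$, and let $(\theta_t)_{t \in \bZ^d}$ be the measure-preserving shift with $Y_s \circ \theta_t = Y_{s+t}$; by hypothesis this action is ergodic. Since the boxes $(B_n)$ form a F{\o}lner sequence for $\bZ^d$ (indeed $|(B_n+l)\, \triangle\, B_n|/|B_n|\to 0$ for every fixed $l$), the pointwise ergodic theorem (see [\ref{Tempelman}]) applies to every $g\in L^1$, giving $|B_n|^{-1}\sum_{t\in B_n} g\circ\theta_t \to \bE g$ almost surely.

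First I would treat the denominator. Since $|\Gamma_n| = \sum_{t \in B_n} Y_t$ and $Y_0$ is bounded, hence in $L^1$, the ergodic theorem yields $|\Gamma_n|/|B_n| \to \bE Y_0$ a.s., and by assumption $\bE Y_0 > 0$. Next the numerator: for fixed $l$ we have $N_n(l) := |\{(t,s) \in \Gamma_n \times \Gamma_n : t - s = l\}| = \sum_{s \in B_n,\, s+l \in B_n} Y_s Y_{s+l}$. I would apply the ergodic theorem to the bounded function $g = Y_0 Y_l$, using $(Y_0 Y_l)\circ\theta_s = Y_s Y_{s+l}$, to get $|B_n|^{-1}\sum_{s\in B_n} Y_s Y_{s+l} \to \bE[Y_0 Y_l]$ a.s. The only discrepancy between this full sum and $N_n(l)$ comes from those $s \in B_n$ with $s + l \notin B_n$; for fixed $l$ these lie in a boundary shell of cardinality $O(n^{d-1})$, and since $0 \le Y_s Y_{s+l} \le 1$ the difference divided by $|B_n| = (2n)^d$ tends to $0$. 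Hence $N_n(l)/|B_n| \to \bE[Y_l Y_0]$ a.s. as well.

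Dividing the two limits gives $a_l^n = N_n(l)/|\Gamma_n| = \big(N_n(l)/|B_n|\big)\big/\big(|\Gamma_n|/|B_n|\big) \to \bE[Y_l Y_0]/\bE Y_0$ a.s., which is the asserted limit. For the final claim, condition (a) of Theorem \ref{marc} is immediate since $B_n \subset B_{n+1}$; condition (b) follows from $|\Gamma_n| = \big(|\Gamma_n|/|B_n|\big)\cdot(2n)^d$, whose first factor converges to $\bE Y_0 > 0$; and condition (c) is the convergence just established, with $a_l = \bE[Y_l Y_0]/\bE Y_0$. Since $\bZ^d$ is countable, intersecting the countably many almost-sure convergence events (one per $l\in\bZ^d$, together with the one for the denominator) produces a single full-measure event on which all assumptions of Theorem \ref{marc} hold simultaneously.

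I expect the only genuine technical point to be the invocation of the multiparameter pointwise ergodic theorem along the boxes $(B_n)$ and the accompanying boundary estimate for $N_n(l)$; both are routine, but they should be stated with care, since it is precisely the F{\o}lner property of $(B_n)$ that legitimizes the almost-sure convergence and makes the ergodicity hypothesis usable.
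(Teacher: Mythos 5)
Your proposal is correct and follows essentially the same route as the paper: both arguments apply the multiparameter pointwise ergodic theorem to $\sum Y_t$ for the denominator and to $\sum Y_sY_{s+l}$ for the numerator, and then dispose of the boundary effect (you bound the $O(n^{d-1})$ discrepancy directly, the paper normalizes by $|[-n,n)^d\cap[-n-l,n-l)^d\cap\bZ^d|$ and notes that its ratio to $(2n)^d$ tends to $1$ --- the same estimate in different clothing). Your explicit verification of conditions (a)--(c) and the countable intersection over $l$ is slightly more detailed than the paper's, but there is no substantive difference.
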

\begin{proof}
This is an easy application of the ergodic properties of $Z_t$. We write
\begin{align*}
&\frac{\{(t,s)\in \Gamma_n\times \Gamma_n: t-s=l\}}{|\Gamma_n|}\\
=&\frac{\sum\limits_{t\in  [-n,n)^d\cap  [-n-l,n-l)^d\cap\bZ^d}Y_tY_{t+l}}{| [-n,n)^d\cap [-n-l,n-l)^d\cap\bZ^d|}\\
&\cdot
\frac{|[-n,n)^d\cap \bZ^d|}{\sum\limits_{t\in [-n,n)^d \cap  \bZ^d}Y_t}\cdot\frac{| [-n,n)^d\cap  [-n-l,n-l)^d)\cap\bZ^d|}{|[-n,n)^d\cap\bZ^d|}.
\end{align*}
Letting $n$ go to infinity we obtain the assertion from the ergodic theorem for random fields (e.g. Lindenstrauss [\ref{Lindenstrauss}, Theorem 1.3]).
\end{proof}

\begin{example}
Let $(Z_t)_{t\in \bZ^d}$ be a random field of independent and identically distributed random variables. A typical example of an ergodic random field is the moving average random field $M_t:=\sum\limits_{l\in  \bZ^d} a_l Z_{t-l}$, where $(a_l)_{l\in \bZ^d} \in \bR^{ \bZ^d}$ such that the sum is well-defined, i.e. the sum of the absolute values is almost surely finite. Let $\varphi:\bR\to \{0,1\}$ be a measurable function, then the random field $\varphi(M_t)$ is an ergodic and stationary random field. Assuming that $\varphi(M_t)>0$ with probability greater than $0$, $\varphi(M_t)$ satisfies the assumption of Lemma \ref{lemmaergodicsampling}.
\end{example}
\subsection{From Non-Random Sampling to Random Sampling} 
We obtain a CLT on sequences $(\Gamma_n)_{n\in\bN}$ similar to the construction as in Lemma \ref{lemmaergodicsampling}  under the assumption that  $(Y_t)_{t\in\bZ^d}$ is \emph{$\alpha$-mixing}, which means that
\begin{align*}
\alpha_Y(k; u,v)&:=\sup\{\alpha(\sigma(Y_t,t\in A),\sigma(Y_t,t\in B))\,:\,dist(A,B)\ge k, |A|\le u, |B|\le v\}\to 0
\end{align*}
for $k\to\infty$ for every $u,v\in \bN$, where for two $\sigma$-fields $\mathcal{F}$ and $\mathcal{G}$, $\alpha(\mathcal{F},\mathcal{G})$ is defined by
\begin{align*}
\sup\{|P(A)P(B)-P(A\cap B)|\,:\,A\in\mathcal{F},B\in\mathcal{G}\}.
\end{align*}
A related but much stronger condition is $h$-dependence. A stationary random field $Y=(Y_t)_{t\in \bZ^d}$ or $Y=(Y_t)_{t\in \bR^d}$ is \emph{$h$-dependent} $(h>0)$, if for every two finite subsets $A,B\subset \bZ^d$ ($\subset \bR^d$, resp.) the two $\sigma$-fields $\sigma(Y_s: s\in A)$ and $\sigma(Y_s:s\in B)$ are independent if $dist(A,B)>h$.
\begin{theorem}\label{extra}
Let $(Y_t)_{t\in\bZ^d}$ be a $\{0,1\}-$valued $\alpha-$mixing random field, which is independent of the L\'{e}vy basis $L$ and satisfies $P(Y_0=1)>0$. Moreover, assume there exists a $\delta>0$ such that $Y$ satisfies
\begin{itemize}
\item[i)] for every $u,v\in\bN$ it holds $\alpha_Y(k;u,v)k^{d}\to 0$ for $k\to \infty$,
\item[ii)] for every $u,v\in\bN$ such that $u+v\le 4$ it holds $\sum\limits_{k=0}^\infty k^{d-1} \alpha_{Y}(k;u,v)<\infty$ and especially $\sum\limits_{k=0}^\infty k^{d-1}\alpha_Y(k;1,1)^{\delta/(2+\delta)}<\infty$.
\end{itemize}
Let \begin{align*}
\Gamma_n:=\{ t\in [-n,n)^d\cap \bZ^d \,:\, Y_t=1\}.
\end{align*} and $X=(X_t)_{t\in\bR^d}$ be a moving average random field with $X_t=\int_{\bR^d} f(t-u)\,dL(u)$ with $\bE |L([0,1]^d)|^{2+\delta}<\infty$ and $f\in L^1(\bR^d)\cap L^{2+\delta}(\bR^d)$. If
\begin{align*}
\sum\limits_{t\in \bZ^d}\bE Y_0Y_t\int\limits_{\bR^d}|f(-u)|\,|f(t-u)|\,\lambda^d(du)<\infty,
\end{align*}
then we have that
\begin{align*}
\frac{1}{\sqrt{|\Gamma_n|}}\sum\limits_{t\in \Gamma_n} \left(X_t- \beta\right)\stackrel{d}{\to} N\left(0,\sum\limits_{t\in \bZ^d}\frac{1}{\bE Y_0}\cov(Y_t(X_t-\beta),Y_0(X_0-\beta))\right),
\end{align*}
where $\beta=\bE L([0,1]^d) \int\limits_{\bR^d} f(u) \lambda^d(du)$.
In the special case that $Y$ is $h-$dependent for some finite $h>0$, it is enough to assume that $\bE |L([0,1]^d)|^{2}<\infty$ and $f\in L^1(\bR^d)\cap L^{2}(\bR^d)$.
\end{theorem}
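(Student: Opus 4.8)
The plan is to transfer the problem to the fixed box $B_n:=[-n,n)^d\cap\bZ^d$ and reduce it to a central limit theorem for a single stationary mixing field. Write $m:=\bE L([0,1]^d)\int_{\bR^d}f(u)\,\lambda^d(du)=\bE X_0$. Since $t\in\Gamma_n$ iff $t\in B_n$ and $Y_t=1$,
\[
\sum_{t\in\Gamma_n}(X_t-m)=\sum_{t\in B_n}Y_t(X_t-m)=\sum_{t\in B_n}W_t,\qquad W_t:=Y_t(X_t-m).
\]
Because $Y$ is independent of $L$, the field $W=(W_t)_{t\in\bZ^d}$ is centred ($\bE W_0=\bE Y_0\cdot\bE(X_0-m)=0$) and strictly stationary. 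As $\al$-mixing implies ergodicity, the ergodic theorem for $\bZ^d$-actions (used already in Lemma \ref{lemmaergodicsampling}) gives $|\Gamma_n|/|B_n|=|B_n|^{-1}\sum_{t\in B_n}Y_t\to\bE Y_0>0$ a.s. By Slutsky's theorem it then suffices to prove $|B_n|^{-1/2}\sum_{t\in B_n}W_t\stackrel{d}{\to}N(0,V)$ with $V:=\sum_{l\in\bZ^d}\cov(W_0,W_l)$, the stated limit following with variance $V/\bE Y_0$; here $\cov(W_0,W_l)=\bE[Y_0Y_l]\cov(X_0,X_l)$, which is the summand in the variance of the statement.

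Since $X$ is a full moving average, $W$ is not finitely dependent, so I would truncate the kernel. For $q>0$ put $f^{(q)}:=f\one_{[-q,q]^d}$, $X^{(q)}_t:=\int_{\bR^d}f^{(q)}(t-u)\,dL(u)$ and $W^{(q)}_t:=Y_t(X^{(q)}_t-\bE X^{(q)}_0)$. As $f^{(q)}$ is supported in $[-q,q]^d$, $X^{(q)}_t$ is a function of $L$ restricted to $t+[-q,q]^d$, so $X^{(q)}_t$ and $X^{(q)}_s$ are independent whenever $\|t-s\|_\infty>2q$. Moreover $f^{(q)}\in L^1(\bR^d)\cap L^{2+\delta}(\bR^d)$, whence $\bE|X^{(q)}_0|^{2+\delta}<\infty$ by the standard moment bound for integrals against a L\'evy basis with $\bE|L([0,1]^d)|^{2+\delta}<\infty$, and so $\bE|W^{(q)}_0|^{2+\delta}=\bE Y_0\cdot\bE|X^{(q)}_0-\bE X^{(q)}_0|^{2+\delta}<\infty$. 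Next I would apply a CLT for stationary $\al$-mixing random fields of Bolthausen type to $W^{(q)}$: as $W^{(q)}_t$ is a pointwise function of the pair $(Y_t,X^{(q)}_t)$ of two \emph{independent} fields, one of which is independent across sets at $\|\cdot\|_\infty$-distance $>2q$, one obtains $\alpha_{W^{(q)}}(k;u,v)\le\alpha_Y(k;u,v)$ for all $k>2q\sqrt d$, so conditions (i),(ii) transfer to $W^{(q)}$ (the finitely many small-$k$ terms affecting neither the decay nor the summability). This yields $|B_n|^{-1/2}\sum_{t\in B_n}W^{(q)}_t\stackrel{d}{\to}N(0,V_q)$ with $V_q:=\sum_{l\in\bZ^d}\cov(W^{(q)}_0,W^{(q)}_l)$, a finite sum since $\cov(W^{(q)}_0,W^{(q)}_l)=0$ for $\|l\|_\infty>2q$.

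The truncation is then removed by the standard approximation lemma: it suffices that $V_q\to V$ and $\limsup_n\bE\big(|B_n|^{-1/2}\sum_{t\in B_n}(W_t-W^{(q)}_t)\big)^2\to0$ as $q\to\infty$. Writing $R_t:=W_t-W^{(q)}_t$ and using independence of $Y$ and $L$ together with the isometry \eqref{Ito}, one gets $\cov(R_0,R_l)=\bE[Y_0Y_l]\,\sg^2\int_{\bR^d}(f-f^{(q)})(-u)(f-f^{(q)})(l-u)\,\lambda^d(du)$, and by stationarity the variance above is bounded, uniformly in $n$, by $\sum_{l\in\bZ^d}|\cov(R_0,R_l)|$. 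Since $|f-f^{(q)}|\le|f|$, this sum is dominated by $\sg^2\sum_{l}\bE[Y_0Y_l]\int|f(-u)||f(l-u)|\,\lambda^d(du)<\infty$, the theorem's hypothesis, while each summand tends to $0$ as $q\to\infty$; dominated convergence gives the claim, and the same computation gives $V_q\to V$. In the $h$-dependent case $W^{(q)}$ is itself $\max(h,2q\sqrt d)$-dependent, so a CLT for $m$-dependent random fields applies under only a finite second moment and the truncation step uses only \eqref{Ito}; this is why $\bE|L([0,1]^d)|^2<\infty$ and $f\in L^1(\bR^d)\cap L^2(\bR^d)$ then suffice.

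The step I expect to be the main obstacle is the $\al$-mixing CLT under the \emph{finite-index} conditions (i),(ii). A black-box appeal to Bolthausen's theorem would require a coefficient $\alpha(k;1,\infty)=o(k^{-d})$ with one unbounded index set, which (i) supplies only for finite $v$. The resolution, which I would spell out, exploits the structure of $W^{(q)}$: in the Stein-method (or Bernstein blocking) argument the terms that would otherwise need an unbounded index set correlate a single site $W^{(q)}_j=Y_j(X^{(q)}_j-\bE X^{(q)}_0)$ with a block of sites at distance $>2q\sqrt d$; since $X^{(q)}_j$ is centred and independent both of $Y$ and of every $X^{(q)}$ at that distance, the relevant conditional expectation factorises and vanishes. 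Only correlations among finitely many nearby sites survive, and these are governed exactly by the finite-index coefficients in (i) and (ii)—$u+v\le4$ controlling the second- and fourth-order terms and $u=v=1$ the Lyapunov term. Making this cancellation precise, and checking that it reduces the required mixing input to exactly (i),(ii), is the crux of the proof.
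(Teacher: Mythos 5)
Your proposal follows essentially the same route as the paper's proof: truncate the kernel so that $X^{(h)}$ is finitely dependent, rewrite $\sum_{t\in\Gamma_n}X^{(h)}_t=\sum_{t\in[-n,n)^d\cap\bZ^d}Y_tX^{(h)}_t$, normalise via the ergodic theorem $|\Gamma_n|/(2n)^d\to\bE Y_0$, apply a CLT for the stationary $\alpha$-mixing product field (an $m$-dependent CLT in the $h$-dependent case, which is exactly why second moments then suffice), and remove the truncation through the variance bound coming from \eqref{Ito} together with dominated convergence and a Slutsky-type argument. The one point of divergence is the step you call the crux: the paper does not carry out any Stein-method cancellation argument but simply invokes [\ref{Doukhan}, Theorem 3, p.~48] under conditions i) and ii), so if you are right that the finite-index coefficients $\alpha_Y(k;u,v)$, $u,v\in\bN$, do not literally supply the $\alpha(k;1,\infty)=o(k^{-d})$ hypothesis of the Bolthausen-type theorem, then the extra work you sketch is not something the paper does --- that concern would apply to the paper's own proof as well.
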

\begin{example}
Every $h$-dependent random field $Y$ is $\alpha$-mixing with $\alpha_Y(k;u,v)=0$ for $|k|> h$. Other examples of (non-$h$-dependent) random fields $Y$ with suitable mixing rates can be constructed by [\ref{Doukhan} ,Theorem 2, p. 58].
\end{example}

\subsection{Non-Random Sampling of the Autocovariance}
Our object of interest is the estimator
\begin{align*}
\gamma^*_n(t):=\frac{1}{|\Gamma_n|}\sum\limits_{s\in \Gamma_n} X_sX_{s+t}
\end{align*}
for some $(\Gamma_n)_{n\in\bN}\subset \bZ^d$ of the autocovariance $\gamma_X(t)=\cov(X_0,X_t)$. We assume that $\Gamma_n$ satisfies the same conditions as in Theorem \ref{marc}.
We state a central limit theorem for the sample autocovariance which can be proven similar to Theorem \ref{marc}. Netherless, the  calculations are a little bit longer.\\
We assume that 
\begin{align}\label{eqrestrictions}
\bE\,L([0,1]^d)^4<\infty, \,\bE \,L([0,1]^d)=0, \,\sigma^2:=\bE\,L([0,1]^d)^2>0 
 \end{align}
and denote
\begin{align*} 
\eta:=\sigma^{-4}\bE \,L([0,1]^d)^4.
 \end{align*}
\begin{theorem}\label{sophie3}
Let $m\in\bN$ and $\Delta_1,\dotso,\Delta_m\in\bZ^d$, $\Gamma_n$ as in Theorem \ref{marc}, and let $(X_t)_{t\in\bR^d}=\left(\int_{\bR^d}f(t-s)dL(s)\right)_{t\in\bR^d}$ be a moving average random field such that it satisfies the assumptions (\ref{eqrestrictions}), $f\in L^2(\bR^d)\cap L^4(\bR^d)$ and
\begin{align*}
\sum\limits_{l\in \bZ^d} \int\limits_{\bR^d}  \sup\limits_{n\in\bN}a^{n}_l|f(u)f(u+l)f(u+\Delta_p)f(u+l+\Delta_d)|\lambda^d(du)<\infty
\end{align*}
for every $p,d\in \{1,\dotso,m\}$ and
\begin{align*}
\sum\limits_{l\in\bZ^d}\sup\limits_{n\in\bN}a^{n}_l\gamma_X(l)^2<\infty.
\end{align*}
Then
\begin{align}
\sqrt{|\Gamma_n|}(\gamma_n^{*}(\Delta_1)-\gamma_X(\Delta_1),\dotso,\gamma_n^{*}(\Delta_m)-\gamma_X(\Delta_m))\stackrel{d}{\to} N(0,V),
\end{align}
the multivariate normal distribution with mean $0$ and covariance matrix $V=(v_{pq})_{p,q\in \{1,\dotso,m\}}$ given by
\begin{align*}
v_{pq}=&\quad\,\sum\limits_{l\in\bZ^d}a_l\bigg( (\eta-3)\sigma^4 \int\limits_{\bR^d}f(u)f(u+\Delta_p)f(u+l)f(u+l+\Delta_q)\,\lambda^d(du)\\
&+\gamma_X(l)\gamma_X(l+\Delta_q-\Delta_p)+\gamma_X(l+\Delta_q)\gamma_X(l-\Delta_p)\bigg).
\end{align*}
\end{theorem}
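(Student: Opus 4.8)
The plan is to follow the same truncation-and-$m$-dependence scheme used for Theorem \ref{marc}, the additional effort lying entirely in the fourth-order moment bookkeeping. First I would reduce the multivariate statement to a one-dimensional one by the Cram\'er--Wold device: fix $c=(c_1,\dots,c_m)\in\bR^m$ and set
\[
U_s:=\sum_{p=1}^m c_p\bigl(X_sX_{s+\Delta_p}-\gamma_X(\Delta_p)\bigr),\qquad s\in\bZ^d,
\]
which under \eqref{eqrestrictions} is a strictly stationary, mean-zero random field (mean zero because $\bE(X_sX_{s+\Delta_p})=\gamma_X(\Delta_p)$). It then suffices to prove $|\Gamma_n|^{-1/2}\sum_{s\in\Gamma_n}U_s\stackrel{d}{\to}N(0,c^\top Vc)$.

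Before treating asymptotic normality I would pin down the limiting covariance. The basic tool is the fourth-order moment formula for integrals against a mean-zero L\'evy basis: for $X_i=\int f_i\,dL$,
\[
\bE\Bigl[\prod_{i=1}^4 X_i\Bigr]=\kappa_4\int_{\bR^d}f_1f_2f_3f_4\,\lambda^d(du)+\sigma^4\bigl(I_{12}I_{34}+I_{13}I_{24}+I_{14}I_{23}\bigr),
\]
where $I_{ij}:=\int f_if_j\,\lambda^d$ and $\kappa_4=\bE L([0,1]^d)^4-3\sigma^4=(\eta-3)\sigma^4$ is the fourth cumulant; this extends \eqref{Ito} and follows from the moment--cumulant relations for infinitely divisible integrals. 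Applying it to the shifted kernels $f(s-\cdot),f(s+\Delta_p-\cdot),f(s'-\cdot),f(s'+\Delta_q-\cdot)$ and subtracting $\gamma_X(\Delta_p)\gamma_X(\Delta_q)$ gives, with $l=s'-s$,
\begin{align*}
\cov(X_sX_{s+\Delta_p},X_{s'}X_{s'+\Delta_q})
&=(\eta-3)\sigma^4\int_{\bR^d} f(u)f(u+\Delta_p)f(u+l)f(u+l+\Delta_q)\,\lambda^d(du)\\
&\quad+\gamma_X(l)\gamma_X(l+\Delta_q-\Delta_p)+\gamma_X(l+\Delta_q)\gamma_X(l-\Delta_p).
\end{align*}
Summing over $s,s'\in\Gamma_n$ and grouping the pairs by $l=s'-s$ (of which there are $a_l^n|\Gamma_n|$) yields $|\Gamma_n|\cov(\gamma_n^{*}(\Delta_p),\gamma_n^{*}(\Delta_q))=\sum_{l}a_l^n(\cdots)$. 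The two summability hypotheses, together with $a_l^n\to a_l$ and $0\le a_l^n\le 1$, furnish a dominating function (for the cross terms one uses $2|\gamma_X(l)\gamma_X(l')|\le\gamma_X(l)^2+\gamma_X(l')^2$ and a shift of the summation index), so dominated convergence identifies the limit as $v_{pq}$; in particular $|\Gamma_n|\var\bigl(\sum_p c_p\gamma_n^{*}(\Delta_p)\bigr)\to c^\top Vc$.

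For the CLT I would truncate the kernel. Put $f^{(N)}:=f\,\one_{[-N,N]^d}$, $X_t^{(N)}:=\int f^{(N)}(t-u)\,dL(u)$, and define $U_s^{(N)}$ as above with $X^{(N)}$ replacing $X$. Since $f^{(N)}$ is compactly supported, $X_s^{(N)}$ and $X_{s'}^{(N)}$ are independent once the shifted supports are disjoint, so $(U_s^{(N)})_s$ is $m$-dependent with $m=m(N)$. For each fixed $N$, the normalised sum $|\Gamma_n|^{-1/2}\sum_{s\in\Gamma_n}U_s^{(N)}$ is then governed by the same CLT for $m$-dependent stationary random fields over the sets $\Gamma_n$ that drives the proof of Theorem \ref{marc} (a Bernstein block decomposition of $[-n,n)^d$ into independent blocks separated by corridors of width $>m$, with the regularity of $\Gamma_n$ through conditions (a)--(c) controlling block and corridor counts); this gives $|\Gamma_n|^{-1/2}\sum_{s\in\Gamma_n}U_s^{(N)}\stackrel{d}{\to}N(0,(\sigma^{(N)})^2)$ with $(\sigma^{(N)})^2=\sum_{l}a_l\cov(U_0^{(N)},U_l^{(N)})$ a finite sum.

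The final step, which I expect to be the main obstacle, is to interchange $n\to\infty$ and $N\to\infty$. Redoing the covariance computation for the truncated field shows $(\sigma^{(N)})^2\to c^\top Vc$, so it remains to verify that the truncation error is asymptotically negligible, i.e.
\[
\lim_{N\to\infty}\limsup_{n\to\infty}\var\Bigl(\frac{1}{\sqrt{|\Gamma_n|}}\sum_{s\in\Gamma_n}(U_s-U_s^{(N)})\Bigr)=0,
\]
after which the standard approximation theorem for weak convergence (e.g.\ Billingsley) yields $|\Gamma_n|^{-1/2}\sum_{s\in\Gamma_n}U_s\stackrel{d}{\to}N(0,c^\top Vc)$. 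Securing this $L^2$-bound \emph{uniformly in} $n$ is where the longer calculations sit: one writes $X_sX_{s+\Delta_p}-X_s^{(N)}X_{s+\Delta_p}^{(N)}$ as a sum of products in which at least one factor carries the tail kernel $f-f^{(N)}$, expands the resulting variance again by the fourth-order moment formula, groups by $l=s'-s$, bounds each $a_l^n$ by $\sup_n a_l^n$, and checks that every term is controlled by a tail of the two convergent series in the hypotheses, so that dominated convergence drives the bound to $0$ as $N\to\infty$ uniformly in $n$. This completes the proof.
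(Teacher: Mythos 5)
Your proposal is correct and follows essentially the same route as the paper: Cram\'er--Wold reduction, the fourth-moment formula for L\'evy integrals to identify $v_{pq}$ via dominated convergence over the lags $l$ (the paper's Propositions \ref{fourthmoment} and \ref{es}), truncation of the kernel to obtain an $m$-dependent field to which a CLT for $m$-dependent stationary random fields is applied (the paper cites Heinrich rather than redoing the Bernstein blocking), and finally the $\lim_{N}\limsup_n$ variance bound on the truncation error combined with the approximation theorem [\ref{Brockwell}, Proposition 6.3.9]. The only cosmetic difference is that you sketch the blocking argument yourself where the paper invokes [\ref{Heinrich}, Theorem 2] directly.
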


\subsection{Random Sampling of the Autocovariance}
Now we present a theorem similiar to Theorem \ref{extra}.
\begin{theorem}\label{extra1}
Let $(Y_t)_{t\in \bZ^d}$ be a $\{0,1\}$-valued $\alpha-$mixing random field with mixing rates as in Theorem \ref{extra} ($\delta>0$), which is independent of the L\'{e}vy basis $L$. Let $X=(X_t)_{t\in\bR^d}$ be a moving average random field with $X_t=\int_{\bR^d} f(t-u)\,dL(u)$ such that  (\ref{eqrestrictions}) holds with $\bE |L([0,1]^d)|^{4+\delta}<\infty$ and $f\in L^2(\bR^d)\cap L^{4+\delta}(\bR^d)$. Let $\Delta_1,\dotso,\Delta_m\in \bZ^d$ and for every $p,d\in \{1,\dotso,m\}$ assume that
\begin{align*}
\sum\limits_{t\in \bZ^d}\bE Y_0Y_t\int\limits_{\bR^d}|f(u)f(u+t)f(u+\Delta_p)f(u+t+\Delta_d)|\lambda^d(du)<\infty
\end{align*}
and
\begin{align*}
\sum\limits_{l\in\bZ^d} \bE Y_0Y_l \gamma_X(l)^2<\infty.
\end{align*}
Then for $\Gamma_n:=\{ t\in [-n,n)^d\cap \bZ^d \,:\, Y_t=1\}$ we have
\begin{align}
\sqrt{|\Gamma_n|}(\gamma_n^{*}(\Delta_1)-\gamma_X(\Delta_1),\dotso,\gamma_n^{*}(\Delta_m)-\gamma_X(\Delta_m))\stackrel{d}{\to} N(0,V),
\end{align}
with covariance matrix $V=(v_{pq})_{p,q\in \{1,\dotso,m\}}$ given by
\begin{align}\label{komplizierte Kovarianz}
\nonumber v_{pq}=&\quad\,\sum\limits_{l\in\bZ^d}\frac{\bE Y_0Y_l}{\bE Y_0}\bigg( (\eta-3)\sigma^4 \int\limits_{\bR^d}f(u)f(u+\Delta_p)f(u+l)f(u+l+\Delta_q)\,\lambda^d(du)\\
&+\gamma_X(l)\gamma_X(l+\Delta_p-\Delta_q)+\gamma_X(l+\Delta_p)\gamma_X(l+\Delta_q)\bigg).
\end{align}
\end{theorem}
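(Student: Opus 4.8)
The plan is to reduce the assertion to a multivariate CLT for the stationary $\bR^m$-valued random field
$$\mbf{W}_t:=\big(W_t^{(1)},\dots,W_t^{(m)}\big),\qquad W_t^{(p)}:=Y_t\big(X_tX_{t+\Delta_p}-\gamma_X(\Delta_p)\big),\quad t\in\bZ^d,$$
and then to pass to the random normalisation $\sqrt{|\Gamma_n|}$. Writing $N_n:=|[-n,n)^d\cap\bZ^d|$ and using that $s\in\Gamma_n$ is equivalent to $Y_s=1$ together with $\bE[X_sX_{s+\Delta_p}]=\gamma_X(\Delta_p)$, one has
$$\sqrt{|\Gamma_n|}\big(\gamma^*_n(\Delta_p)-\gamma_X(\Delta_p)\big)=\sqrt{\tfrac{N_n}{|\Gamma_n|}}\;\frac{1}{\sqrt{N_n}}\sum_{s\in[-n,n)^d\cap\bZ^d}W_s^{(p)}.$$
Since an $\alpha$-mixing field is ergodic, the spatial ergodic theorem gives $N_n/|\Gamma_n|\to 1/\bE Y_0$ a.s., so by Slutsky's theorem it suffices to establish $\frac{1}{\sqrt{N_n}}\sum_s\mbf{W}_s\stackrel{d}{\to}N(0,\widetilde V)$ with $\widetilde V=(\bE Y_0)\,V$; the factor $1/\bE Y_0$ produced by the normalisation is precisely what turns the weight $\bE Y_0Y_l$ into $\bE Y_0Y_l/\bE Y_0$ in $V$.

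First I would identify the limiting covariances. As $Y\ind L$ and $\bE\mbf{W}_0=0$, one gets $\cov(W_0^{(p)},W_l^{(q)})=\bE Y_0Y_l\cdot\cov(X_0X_{\Delta_p},X_lX_{l+\Delta_q})$, and the second factor is evaluated by the same fourth-moment formula for Lévy integrals used in the proof of Theorem~\ref{sophie3}: it produces the fourth-cumulant term $(\eta-3)\sigma^4\int_{\bR^d}f(u)f(u+\Delta_p)f(u+l)f(u+l+\Delta_q)\,\lambda^d(du)$ together with two products of autocovariances. Using the symmetries $\gamma_X(-l)=\gamma_X(l)$ and $\bE Y_0Y_l=\bE Y_0Y_{-l}$ to reindex $l\mapsto -l$, the series $\widetilde V_{pq}=\sum_l\cov(W_0^{(p)},W_l^{(q)})$ rearranges into the stated expression for $V_{pq}$; its absolute convergence is exactly what the two summability hypotheses of the theorem guarantee.

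The core analytic step is a truncation, because $\mbf{W}$ is not itself $\alpha$-mixing: $X$ is a moving average over all of $\bR^d$, so $X_t,X_s$ stay dependent for every $t,s$. I would set $f_N:=f\,\one_{[-N,N]^d}$, $X^{(N)}_t:=\int_{\bR^d}f_N(t-u)\,dL(u)$ and $W^{(N,p)}_t:=Y_t\big(X^{(N)}_tX^{(N)}_{t+\Delta_p}-\cov(X^{(N)}_0,X^{(N)}_{\Delta_p})\big)$. Since $W^{(N,p)}_t$ is a functional of $L$ restricted to $t+[-N',N']^d$ with $N'=N+\max_p\|\Delta_p\|$, the field $\mbf{W}^{(N)}$ has finite range as a functional of $L$, and because $Y\ind L$ this makes $\mbf{W}^{(N)}$ itself $\alpha$-mixing with $\alpha_{\mbf{W}^{(N)}}(k)\le\alpha_Y(k)$ for $k>2N'$. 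The bound $\bE|X^{(N)}_0X^{(N)}_{\Delta_p}|^{2+\delta/2}\le\big(\bE|X^{(N)}_0|^{4+\delta}\,\bE|X^{(N)}_{\Delta_p}|^{4+\delta}\big)^{1/2}<\infty$ — which is exactly where $\bE|L([0,1]^d)|^{4+\delta}<\infty$ and $f\in L^{4+\delta}(\bR^d)$ are needed — supplies the $(2+\delta/2)$-th moments demanded by the CLT for $\alpha$-mixing random fields, so, applying that CLT under the rate conditions i)--ii) of Theorem~\ref{extra} and the Cramér--Wold device, $\frac{1}{\sqrt{N_n}}\sum_s\mbf{W}^{(N)}_s\stackrel{d}{\to}N(0,\widetilde V^{(N)})$ for each fixed $N$.

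The main obstacle, and the step requiring the longest computation, is the interchange of the limits in $N$ and $n$. I would verify $\widetilde V^{(N)}\to\widetilde V$ as $N\to\infty$ and, crucially, the asymptotic negligibility of the truncation error,
$$\lim_{N\to\infty}\limsup_{n\to\infty}\var\Big(\tfrac{1}{\sqrt{N_n}}\sum_s\big(W^{(p)}_s-W^{(N,p)}_s\big)\Big)=0,$$
both of which follow from the two summability hypotheses together with the finite-fourth-moment bounds via dominated convergence. A standard approximation lemma (Billingsley's Theorem~3.2) then transfers the CLT from $\mbf{W}^{(N)}$ to $\mbf{W}$, and undoing the Slutsky reduction of the first paragraph yields the claimed limit $N(0,V)$.
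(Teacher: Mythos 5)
Your proposal is correct and follows essentially the same route as the paper: rewrite $\sqrt{|\Gamma_n|}(\gamma_n^*(\Delta_p)-\gamma_X(\Delta_p))$ as a normalised sum of $Y_t(X_tX_{t+\Delta_p}-\gamma_X(\Delta_p))$ over the full box, handle the random normalisation via the ergodic theorem, truncate the kernel so that the truncated field inherits the $\alpha$-mixing rate of $Y$ by independence, apply the mixing CLT with Cram\'er--Wold, identify the covariances via the fourth-moment formula, and remove the truncation by a variance bound and the Slutsky-type approximation lemma. This is exactly the argument the paper compresses into its reference to the proofs of Theorems \ref{extra} and \ref{sophie3}.
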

\section{Applications}\label{applications} \label{S4}
In this section we present an application of the previously stated theorems. We fix the dimension $d=3$ and estimate the parameter $\mu>0$ of the equation
\begin{align}\label{eqestimation}
(\mu-\Delta)X=dL,
\end{align}
where $L$ is a L\'{e}vy basis with $\bE L([0,1]^3)^2<\infty$.
 The mild solution of $(\ref{eqestimation})$ can be written as
\begin{align}\label{mean}
X(x)=\int\limits_{\bR^d} G_\mu(x-z)dL(z),
\end{align}
 where $G_\mu(x):=\frac{\exp\left(-\sqrt{\mu}\|x\|\right)}{\|x\|}$ for $x\neq 0$, see [\ref{Dalang}, Definition 3.5] for the notion of the mild solution. That $G_{\mu}$ is a fundamental solution of $(\mu-\Delta)X=\delta_0$ follows e.g. from [\ref{Hofer}, Section 2.1, Equation (21)]. We see that $G_\mu \in L^1(\bR^3)\cap L^2(\bR^3)$, so $X$ exists since $\bE L([0,1]^3)^2<\infty$.\\
Calculating the mean we obtain
\begin{align*}
\bE X(x)=\bE X(0)=\bE L([0,1]^3)\int_{\bR^3}\frac{\exp\left(-\sqrt{\mu}\|x\|\right)}{\|x\|}dx=\frac{4\pi\bE L([0,1]^3)}{\mu},
\end{align*}
where the last equality follows by using spherical coordinates.
Our moment estimator is then given by
\begin{align}\label{estimator}
\widehat{\mu}_n=4\pi\bE L([0,1]^3)\frac{|\Gamma_n|}{\sum\limits_{k\in\Gamma_n} X(k)}.
\end{align}
\begin{corollary}
Let $\widehat{\mu}_n$ be defined as in $(\ref{estimator})$, $\bE L([0,1]^3)\neq 0$ and $\Gamma_n\subset \bZ^3$ satisfying the assumptions of Theorem \ref{marc}. Then $\widehat{\mu}_n$ defines a consistent and asymptotically normal estimator.
\end{corollary}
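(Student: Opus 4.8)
Write $c:=\bE L([0,1]^3)$ (nonzero by hypothesis) and let $\bar X_n:=\frac{1}{|\Gamma_n|}\sum_{k\in\Gamma_n}X(k)$ be the sample mean, so that $\widehat{\mu}_n=c/\bar X_n$. The computation preceding the corollary gives $\bE X(0)=c\int_{\bR^3}G_\mu(u)\,\lambda^3(du)=c/\mu=:m$, and since $\mu>0$ and $c\neq 0$ we have $m\neq 0$. The estimator is thus a smooth function $g(x):=c/x$ of $\bar X_n$, evaluated near the regular point $m\neq 0$; the whole statement will follow once we have a central limit theorem for $\bar X_n$, to be transported through $g$ by the continuous mapping theorem and the delta method. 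The plan is therefore to apply Theorem \ref{marc} to the kernel $f=G_\mu$ and then push the resulting CLT through $g$.

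To invoke Theorem \ref{marc} I first check its hypotheses for $f=G_\mu$. Conditions a)--c) hold by the assumption on $(\Gamma_n)$, and $G_\mu\in L^1(\bR^3)\cap L^2(\bR^3)$ was already noted. It remains to verify the summability condition \eqref{eq7766}, which I do through the $\Gamma_n$-free sufficient condition from the remark, namely that $F(u):=\sum_{t\in\bZ^3}|G_\mu(u+t)|$ lies in $L^2([0,1]^3)$. This is the one genuinely analytic point and the main obstacle: $G_\mu(x)=e^{-\sqrt{\mu}\,\|x\|}/\|x\|$ has a singularity of order $\|x\|^{-1}$ at the origin, so on $[0,1]^3$ the function $F$ inherits singularities of order $\|u-t_0\|^{-1}$ at the finitely many lattice points $t_0\in\overline{[0,1]^3}$, while the remaining tail of the sum is bounded thanks to the exponential factor. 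Because $\|x\|^{-2}$ is locally integrable in dimension $d=3$ (in polar coordinates $\int_{\|x\|\le 1}\|x\|^{-2}\,\lambda^3(dx)=4\pi\int_0^1\di r<\infty$), each such singularity is square-integrable, whence $F\in L^2([0,1]^3)$ and \eqref{eq7766} holds. This is precisely where the dimension $d=3$ enters.

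Theorem \ref{marc} then yields
\begin{align*}
\sqrt{|\Gamma_n|}\,(\bar X_n-m)\stackrel{d}{\to}N(0,\tau^2),\qquad \tau^2:=\sum_{t\in\bZ^3}a_t\cov(X_t,X_0),
\end{align*}
with $\tau^2<\infty$, again part of the conclusion of Theorem \ref{marc}. For consistency, this CLT forces $\bar X_n-m=O_P(|\Gamma_n|^{-1/2})\to 0$ in probability, so $\bar X_n\to m$ in probability; since $g(x)=c/x$ is continuous at $m\neq 0$, the continuous mapping theorem gives $\widehat{\mu}_n=g(\bar X_n)\to g(m)=c/m=\mu$ in probability, i.e.\ $\widehat{\mu}_n$ is consistent. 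For asymptotic normality I apply the delta method to $g$ at $m$: as $g$ is differentiable at $m$ with $g'(m)=-c/m^2=-\mu^2/c$, the delta method combined with the CLT for $\bar X_n$ gives
\begin{align*}
\sqrt{|\Gamma_n|}\,(\widehat{\mu}_n-\mu)\stackrel{d}{\to}N\!\left(0,\ \frac{\mu^4}{c^2}\,\tau^2\right),
\end{align*}
which completes the proof. Apart from the integrability verification of the second paragraph, all remaining steps are a direct combination of Theorem \ref{marc} with standard limit-theorem tools.
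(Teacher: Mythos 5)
Your proof is correct and follows the same overall route as the paper: verify the hypotheses of Theorem \ref{marc} for $f=G_\mu$, obtain the CLT for the sample mean, and transport it through $x\mapsto c/x$ (the paper compresses this last step into one sentence; your explicit delta-method computation, with $g'(m)=-\mu^2/c$ and limit variance $\mu^4\tau^2/c^2$, is a welcome expansion). The only substantive difference is how the summability condition \eqref{eq7766} is checked. The paper bounds $\sum_{t\in\bZ^3}\int_{\bR^3}|G_\mu(-u)G_\mu(t-u)|\,\lambda^3(du)$ directly by inserting $\exp(\pm\varepsilon\|u\|\pm\varepsilon\|t-u\|)$, using the triangle inequality to extract a factor $\exp(-\varepsilon\|t\|)$ and Cauchy--Schwarz to reduce to $\|G_\mu\exp(\varepsilon\|\cdot\|)\|_{L^2}^2<\infty$ for $0<\varepsilon<\sqrt{\mu}$. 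You instead invoke the paper's own remark that this sum equals $\int_{[0,1]^3}F(u)^2\,\lambda^3(du)$ with $F(u)=\sum_{t\in\bZ^3}|G_\mu(u+t)|$, and check $F\in L^2([0,1]^3)$ by isolating the $\|u-t_0\|^{-1}$ singularities at the corners of the cube and using local integrability of $\|x\|^{-2}$ in dimension $3$. Since the remark identifies the two quantities, these are two estimates of the same object; both ultimately rest on $G_\mu\exp(\varepsilon\|\cdot\|)\in L^2(\bR^3)$, i.e.\ on the square-integrability of the $\|x\|^{-1}$ singularity in $d=3$ together with the exponential decay controlling the lattice sum. Your version is slightly more hands-on (one should make explicit that the non-singular part of the lattice sum defining $F$ is uniformly bounded on $[0,1]^3$, which the exponential factor gives immediately), while the paper's is a one-line Cauchy--Schwarz trick; either is acceptable.
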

\begin{proof}
By Theorem \ref{marc} we conclude that $\widehat{\mu}_n^{-1}$ is asymptotically normal, as 
\begin{align*}
&\sum\limits_{t\in \bZ^d}\int\limits_{\bR^d}|G_\mu(-u)G_\mu(t-u)|\lambda^d(du)\\
=&\sum\limits_{t\in \ \bZ^d}\int\limits_{\bR^d} |G_\mu(-u)G_\mu(t-u)|\exp\left(\varepsilon \|u\|+\varepsilon\|t-u\|\right) \exp\left(-\varepsilon \|u\|-\varepsilon\|t-u\|\right) \lambda^d(du)\\
\le &\sum\limits_{t\in \bZ^d}\exp\left(-\varepsilon \|t\|\right)\|G_\mu\exp(\varepsilon\|\cdot\|)\|_{L^2}^2,
\end{align*}
which is finite for  $0<\varepsilon<\sqrt{\mu}$. Asymptotic normality and consistency of $\widehat{\mu}_n^{-1}$ implies consistency of $\widehat{\mu}_n$, and from both we obtain asymptotical normality of $\widehat{\mu}_n$.
\end{proof}
If in the situation above, additionally $\Gamma_n$ is a tempered F{\o}lner sequence, which means that
\begin{align}
\label{cond1}\lim\limits_{n\to\infty}\frac{((k+\Gamma_n)\setminus \Gamma_n) \cup (\Gamma_n\setminus (\Gamma_n+k))}{|\Gamma_n|}&=0\textrm{ for all }k\in\bZ^3 \textrm{ and }\\
\label{cond2}\left|\bigcup_{k<n}(-\Gamma_k+\Gamma_n)\right|&\le C |\Gamma_n|\textrm{ for some constant }C>0,
\end{align}
then the estimator $\hat{\mu}_n$ is strongly consistent by [\ref{Lindenstrauss}, Theorem 1.2, p. 260]. A simple example of a tempered F{\o}lner sequence  is $(-n,n]^d\cap \bZ^d$.
\section{Proof of Theorems \ref{marc} and \ref{extra}}\label{sectionproof1}
Since 
\begin{align*}
X_t=\int\limits_{\bR^d} f(t-u)dL'(u)+\bE (L([0,1]^d))\int\limits_{\bR^d}f(u)\lambda^d(du),
\end{align*}
where the mean zero L\'{e}vy basis $L'$ is defined by
\begin{align*}
L'(A):=L(A)-\bE L([0,1]^d)\lambda^d(A),\, A\in\mathcal{B}_b(\bR^d),
\end{align*}
and since 
\begin{align*}
\cov(Y_tX_t,Y_0X_0)=\cov(Y_t(X_t-\bE X_t),Y_0 (X_0-\bE X_0))
\end{align*}
in Theorem \ref{extra} by independence of $X$ and $Y$, we may and do assume for rest of this section that $\bE L([0,1]^d)=0$.

\begin{proof}[Proof of Theorem $\ref{marc}$]
For every $h\in\bN$ we define a new random field $(X^{(h)}_t)_{t\in\Delta A\bZ^d}$ by
\begin{align*}
X^{(h)}_t:=\int\limits_{\bR^d}f(t-u)\one_{[-h,h)^d}(t-u) \,dL(u).
\end{align*}
It is obvious that $(X_t^{(h)})_{t\in\Delta A\bZ^d}$ is $2\sqrt{d} h+1$-dependent.\\
We want to use [$\ref{Heinrich}$, Theorem 2, p. 135], which states the following: If we have a sequence $\{X_{nz}, z\in V_n\subset \bZ^d\}$, $n\in\bN$, of $m_n$-dependent  random fields $(m_n\ge 1)$ with $|V_n|\to \infty$, $\bE X_{nz}=0$ for all $z\in V_n$, $\bE \left(\sum_{z\in V_n} X_{nz}\right)^2=1$ and satisfying the conditions
\begin{align*}
&\sup_{n\in \bN}\sum\limits_{z\in V_n} \bE X_{nz}^2 <\infty\textrm{ and}\\
&m_n^{2d} \sum\limits_{z\in V_n} \bE X_{nz}^2\one_{|X_{nz}|\ge \varepsilon m_n^{-2d}}\to 0\textrm{ as }n\to\infty
\end{align*}
for every $\varepsilon>0$, then $\sum_{z\in V_n} X_{nz}\stackrel{d}{\to}N(0,1)$ as $n\to\infty$. In our case $m_n$ is constant, so the conditions are simpler.
We set $U^{(n,h)}_t:=\frac{1}{\sqrt{|\Gamma_n|}}X^{(h)}_t$. We calculate that
\begin{align}\label{irgendeinegleichung}
\bE \left( \sum\limits_{t\in \Gamma_n}U_t^{(n,h)} \right)^2
&=\frac{1}{|\Gamma_n|}\sum\limits_{t,s\in \Gamma_n}\bE X^{(h)}_tX_{s}^{(h)}&=\frac{1}{|\Gamma_n|}\sum\limits_{t,s\in \Gamma_n}\gamma_{X^{(h)}}(t-s)=\sum\limits_{l\in \bZ^d} a^{n}_l \gamma_{X^{(h)}}(l).
\end{align}
Letting $n$ go to infinity, we obtain by Lebesgue's dominated convergence theorem
\begin{align*}
\bE \left( \sum\limits_{t\in \Gamma_n}U_t^{(n,h)} \right)^2\to \sum\limits_{t\in\bZ^d}a_t\gamma_{X^{(h)}}(t).
\end{align*}
Furthermore, we immediately see that
\begin{align*}
\sum\limits_{t\in\Gamma_n} \bE (U^{(n,h)}_t)^2=\frac{1}{|\Gamma_n|}\sum\limits_{t\in \Gamma_n} \bE (X^{(h)}_t)^2=\gamma_{X^{(h)}}(0)<\infty
\end{align*}
and
\begin{align*}
\sum\limits_{t\in \Gamma_n}\bE \left((U_t^{(n,h)})^2\one_{|U_t^{(n,h)}|\ge \varepsilon}\right)&=\frac{1}{|\Gamma_n|}\sum\limits_{t\in \Gamma_n}\bE (X^{(h)}_t)^2\one_{|X^{(h)}_t|\ge \varepsilon \sqrt{|\Gamma_n|}}\\
&=\bE (X^{(h)}_0)^2\one_{|X^{(h)}_0|\ge \varepsilon \sqrt{|\Gamma_n|}}\to 0\qquad\textrm{ for }n\to\infty.
\end{align*}
Hence all conditions of [\ref{Heinrich}, Theorem 2, p. 135] as stated above are satisfied and we conclude that
\begin{align*}
\frac{1}{\sqrt{|\Gamma_n|}}\sum\limits_{t\in \Gamma_n}X^{(h)}_t\stackrel{d}{\to} Y^{(h)}
\end{align*}
for $n\to\infty$ with $Y^{(h)}\sim N(0,\sum_{t\in \bZ^d}a_t\gamma_{X^{(h)}}(t))$.\\
Observe that $\lim_{h\to\infty}\gamma_{X^{(h)}}(t)=\gamma_X(t)$ for all $t\in\bZ^d$ by (\ref{Ito}) and dominated convergence and $|\gamma_{X^{(h)}}(t)|\le \sigma^2 \int\limits_{\bR^d}|f(-u)|\,|f(t-u)|\lambda^d(du)$, hence we conclude by dominated convergence that
\begin{align*}
\lim\limits_{h\to\infty}\sum\limits_{t\in \bZ^d}a_t \gamma_{X^{(h)}}(t)=\sum\limits_{t\in \bZ^d}a_t \gamma_X(t)
\end{align*}
and hence
\begin{align*}
Y^{(h)}\stackrel{d}{\to} Y\sim N(0,\sum\limits_{t\in\bZ^d}a_t\gamma_X(t))\textrm{ for }h\to\infty.
\end{align*}
As in (\ref{irgendeinegleichung}), we obtain
\begin{align*}
&\bE \left(\frac{1}{\sqrt{|\Gamma_n|}}\sum\limits_{t\in \Gamma_n} (X_t-X_t^{(h)})\right)^2=\sum\limits_{l\in\bZ^d} a_l^n\gamma_{X-X^{(h)}}(l)\\
=&\sum\limits_{l\in\bZ^d} a_l^n\int\limits_{\bR^d}f(l-u)\one_{\bR^d\setminus [-h,h)^d} (t-u)f(-u)\one_{\bR^d\setminus [-h,h)^d}(-u)\lambda^d(du),
\end{align*}
hence
\begin{align*}
\lim\limits_{h\to\infty}\lim\limits_{n\to\infty}\bE \left(\frac{1}{\sqrt{|\Gamma_n|}}\left(\sum\limits_{t\in \Gamma_n}X_t-X_t^{(h)}\right)\right)^2=0
\end{align*}
from Lebesgue's dominated convergence theorem for series.
An application of Chebyshev's inequality gives for $\varepsilon>0$,
\begin{align*}
&\lim\limits_{h\to\infty}\lim\limits_{n\to\infty}P\left(\frac{1}{\sqrt{|\Gamma_n|}}\left|\sum\limits_{t\in \Gamma_n} X_t-X_t^{(h)}\right|>\varepsilon\right)=0.
\end{align*}
The claim then follows by a variant of Slutsky's theorem, e.g. [\ref{Brockwell}, Proposition 6.3.9, pp. 207-208].
\end{proof}
\begin{proof}[Proof of Theorem \ref{extra}]
The proof is very similiar to the proof of Theorem \ref{marc}. Let us start by approximating $X_t$ by $X_t^{(h)}$ as above. Observe that
\begin{align*}
\frac{1}{\sqrt{|\Gamma_n|}} \sum\limits_{t\in \Gamma_n} X^{(h)}_t=\frac{(2n)^{d/2}}{\sqrt{|\Gamma_n|}} \frac{1}{(2n)^{d/2}}\sum\limits_{t\in (-n,n]^d\cap \bZ^d} X^{(h)}_tY_t.
\end{align*}
We know that $\frac{(2n)^{d/2}}{\sqrt{|\Gamma_n|}}\to (\sqrt{\bE Y_0})^{-1}$, which follows from the ergodic theorem. Furthermore, as $(X^{(h)}_t)$ is $(2\sqrt{d} h+1)-$dependent and $Y$ is $\alpha-$mixing, we obtain that $(X^{(h)}_tY_t)_{t\in\bZ}$ is $\alpha-$mixing with the same rate as $Y$. From this and conditions i) and ii) of Theorem \ref{extra} we conclude by [\ref{Doukhan}, Theorem 3, p. 48] that
\begin{align*}
\frac{1}{(2n)^{d/2}} \sum\limits_{t\in \Gamma_n} X^{(h)}_t\stackrel{d}{\to}N\left(0,\sum\limits_{t\in \bZ^d} \frac{1}{\bE Y_0}\cov(X_t^{(h)}Y_t,X_0^{(h)}Y_0)\right)\textrm{ for }n\to\infty.
\end{align*}
Now by the same arguments as above we conclude that this theorem holds true when $Y$ is $\alpha$-mixing. When $Y$ is even $h'$-dependent for some $h'$, then $(X_t^{(h)}Y_t)_{t\in\bZ^d}$ is $\max\{h',2\sqrt{d} h+1\}$-dependent and we can use [\ref{Heinrich}, Theorem 2, p. 135] instead of [\ref{Doukhan}, Theorem 3, p. 48] and hence need weaker moment conditions.
\end{proof}
\section{Proof of Theorems \ref{sophie3} and \ref{extra1}}\label{sectionproof2}
\begin{proposition}\label{fourthmoment}
Let $f_1,\dotso,f_4 \in L^4(\bR^d)\cap L^2(\bR^d)$. It holds true that
\begin{align*}
\bE \prod_{i=1}^4 \int\limits_{\bR^d} f_i(t)dL(t)=&\quad(\eta-3)\sigma^4\int\limits_{\bR^d}f_1(u)f_2(u)f_3(u)f_4(u)\lambda^d(du)\\
&+\sigma^4\int\limits_{\bR^d} \prod_{i=1,2} f_i(u)\lambda^d(du)   \int\limits_{\bR^d} \prod_{i=3,4} f_i(u)\lambda^d(du)   \\
&+\sigma^4\int\limits_{\bR^d} \prod_{i=1,3} f_i(u)\lambda^d(du)   \int\limits_{\bR^d} \prod_{i=2,4} f_i(u)\lambda^d(du)   \\
&+\sigma^4\int\limits_{\bR^d} \prod_{i=1,4} f_i(u)\lambda^d(du)   \int\limits_{\bR^d} \prod_{i=2,3} f_i(u)\lambda^d(du).
\end{align*}
\end{proposition}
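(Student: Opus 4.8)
The plan is to establish the identity first for simple functions and then to extend it to all $f_i\in L^2(\bR^d)\cap L^4(\bR^d)$ by an $L^4$-approximation. Throughout I use that $L([0,1]^d)$ has mean zero, so that by the L\'evy--Khintchine formula and infinite divisibility the cumulants of $L(A)$ scale linearly in volume: $\kappa_k(L(A))=\kappa_k\,\lambda^d(A)$, where the second and fourth cumulant rates are $\kappa_2=a+\int x^2\,\nu(dx)=\sigma^2$ and $\kappa_4=\int x^4\,\nu(dx)$. The moment--cumulant relation for a mean-zero variable, $\bE L(A)^4=\kappa_4(L(A))+3\kappa_2(L(A))^2$, then gives $\bE L(A)^2=\sigma^2\lambda^d(A)$ and $\bE L(A)^4=(\eta-3)\sigma^4\lambda^d(A)+3\sigma^4\lambda^d(A)^2$, the last equality using that $\bE L([0,1]^d)^4=\int x^4\,\nu(dx)+3\sigma^4$ identifies $\kappa_4$ with $(\eta-3)\sigma^4$. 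These are the only properties of $L$ I will need.

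First I would reduce to a common representation: writing each $f_i=\sum_j x_{ij}\one_{A_j}$ over a single finite partition into disjoint sets $A_j\in\mathcal{B}_b(\bR^d)$, the product $\prod_i\int f_i\,dL$ expands as $\sum_{j_1,\dots,j_4}\big(\prod_i x_{ij_i}\big)\prod_k L(A_{j_k})$. Since the $L(A_j)$ are independent for distinct $j$ and mean zero, $\bE\prod_k L(A_{j_k})$ vanishes unless every value in $\{j_1,\dots,j_4\}$ occurs at least twice, which leaves exactly the partitions of $\{1,2,3,4\}$ into two blocks of size two or one block of size four. Using the moments above together with $\bE L(A)^2L(B)^2=\sigma^4\lambda^d(A)\lambda^d(B)$ for $A\ne B$, the two-block contributions reassemble, for each of the three pairings of $\{1,2,3,4\}$, into a full product $\sigma^4\int f_af_b\,\lambda^d(du)\int f_cf_d\,\lambda^d(du)$ minus its diagonal term $\sigma^4\sum_j x_{1j}x_{2j}x_{3j}x_{4j}\,\lambda^d(A_j)^2$.

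The step I expect to be the crux is the exact cancellation of these diagonal corrections. Each of the three pairings produces the same diagonal term, so the two-block part carries a total correction $-3\sigma^4\sum_j x_{1j}\cdots x_{4j}\,\lambda^d(A_j)^2$, while the single-block term $\bE L(A_j)^4$ contributes precisely the opposite quadratic-in-volume part $+3\sigma^4\sum_j x_{1j}\cdots x_{4j}\,\lambda^d(A_j)^2$. After this cancellation only the linear-in-volume piece $(\eta-3)\sigma^4\sum_j x_{1j}\cdots x_{4j}\,\lambda^d(A_j)=(\eta-3)\sigma^4\int f_1f_2f_3f_4\,\lambda^d(du)$ survives, alongside the three pairing products, which is exactly the asserted formula. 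Conceptually this reflects that the only nontrivial joint fourth cumulant of $\big(\int f_1\,dL,\dots,\int f_4\,dL\big)$ is $\kappa_4\int f_1f_2f_3f_4\,\lambda^d(du)$, localised on the diagonal, and I would keep this cumulant picture in view as a check on the combinatorial bookkeeping.

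Finally I would remove the simplicity assumption. Choosing simple $f_i^{(n)}\to f_i$ simultaneously in $L^2(\bR^d)$ and $L^4(\bR^d)$, the identity already proven, applied to the simple difference $f_i^{(n)}-f_i^{(m)}$ with all four arguments equal, yields $\bE\big(\int(f_i^{(n)}-f_i^{(m)})\,dL\big)^4=(\eta-3)\sigma^4\|f_i^{(n)}-f_i^{(m)}\|_4^4+3\sigma^4\|f_i^{(n)}-f_i^{(m)}\|_2^4\to 0$, so $\int f_i^{(n)}\,dL$ is Cauchy, hence convergent, in $L^4$, with limit $\int f_i\,dL$. A telescoping generalised H\"older estimate then gives $\prod_i\int f_i^{(n)}\,dL\to\prod_i\int f_i\,dL$ in $L^1$, so the left-hand sides converge, while generalised H\"older (for the fourfold integral) and Cauchy--Schwarz (for the pairwise integrals) give convergence of the right-hand side. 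Passing to the limit establishes the identity for all $f_i\in L^2(\bR^d)\cap L^4(\bR^d)$; note that no circularity arises, since the $L^4$-convergence step invokes the formula only for simple functions.
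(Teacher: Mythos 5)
Your proof is correct, but it takes a different route from the paper only in the sense that the paper gives no argument at all: its entire proof of Proposition \ref{fourthmoment} is the single line ``Follows directly from the proof of [Brandes--Curato, Lemma 4.1]'', deferring the computation to an external reference (stated there for $d=1$). What you supply is essentially the self-contained version of that computation: the cumulant scaling $\kappa_k(L(A))=\kappa_k\lambda^d(A)$ with $\kappa_4=(\eta-3)\sigma^4$, the expansion over a common partition, the observation that mean zero kills every index pattern except ``one block of four'' and ``two blocks of two'', and the key cancellation of the three diagonal corrections $-3\sigma^4\sum_j x_{1j}\cdots x_{4j}\lambda^d(A_j)^2$ against the $+3\sigma^4\lambda^d(A_j)^2$ part of $\bE L(A_j)^4$. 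The closing approximation step is also handled correctly, and your remark that applying the simple-function identity to $f_i^{(n)}-f_i^{(m)}$ yields $L^4$-Cauchyness of the integrals (so no circularity) is exactly the point that needs to be made there. Two small things you should make explicit if you write this up: the identity requires $\bE L([0,1]^d)=0$, which is not in the statement of the proposition but is imposed by \eqref{eqrestrictions} in the surrounding section, and your formula $\bE L(A)^2L(B)^2=\sigma^4\lambda^d(A)\lambda^d(B)$ should be stated for \emph{disjoint} $A,B$ (which is what the common partition gives you). Your approach buys a proof that is verifiable within the paper and makes the cumulant structure transparent; the paper's citation buys brevity at the cost of sending the reader to a one-dimensional analogue whose adaptation to $\bR^d$ is left implicit.
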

\begin{proof}
Follows directly from the proof of [\ref{Brandes}, Lemma 4.1].
\end{proof}

\begin{proposition}\label{es}Under the assumptions of Theorem \ref{sophie3}, for $\Delta_p,\Delta_q \in\bZ^d$, we have
\begin{align*}
|\Gamma_n|\cov (\gamma_n^{*}(\Delta_p),\gamma_n^{*}(\Delta_q))\to \sum\limits_{l\in\bZ^d} a_l T_l \quad\textrm{for }n\to\infty,
\end{align*}
where
\begin{align*}
T_l:=&(\eta-3)\sigma^4\int\limits_{\bR^d}f(u)f(u+l)f(u+\Delta_p)f(u+l+\Delta_q)\lambda^d(du)\\
&+\gamma_X(l)\gamma_X(l+\Delta_q-\Delta_p)+\gamma_X(l+\Delta_q)\gamma_X(l-\Delta_p).
\end{align*}
\end{proposition}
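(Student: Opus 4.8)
The plan is to compute the covariance exactly at finite $n$, rewrite it as a single sum over $\bZ^d$ weighted by the pair-counts $a_l^n$, identify the summand with $T_l$ via the fourth-moment formula, and pass to the limit by dominated convergence for series.

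\textbf{Step 1 (reduction to a single sum).} Using the definition of $\gamma_n^*$ together with bilinearity of the covariance, I would first write
\begin{align*}
|\Gamma_n|\,\cov(\gamma_n^*(\Delta_p),\gamma_n^*(\Delta_q))=\frac{1}{|\Gamma_n|}\sum_{s,s'\in\Gamma_n}\cov(X_sX_{s+\Delta_p},X_{s'}X_{s'+\Delta_q}).
\end{align*}
By strict stationarity the summand depends only on $l=s'-s$, so collecting equal differences and invoking the definition of $a_l^n$ yields
\begin{align*}
|\Gamma_n|\,\cov(\gamma_n^*(\Delta_p),\gamma_n^*(\Delta_q))=\sum_{l\in\bZ^d}a_l^n\,d(l),\qquad d(l):=\cov(X_0X_{\Delta_p},X_lX_{l+\Delta_q}).
\end{align*}

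\textbf{Step 2 (identification $d(l)=T_l$).} Here I would apply Proposition \ref{fourthmoment} to the four functions $u\mapsto f(-u)$, $u\mapsto f(\Delta_p-u)$, $u\mapsto f(l-u)$, $u\mapsto f(l+\Delta_q-u)$ (all in $L^2\cap L^4$ by hypothesis) to evaluate $\bE[X_0X_{\Delta_p}X_lX_{l+\Delta_q}]$. After the substitution $u\mapsto-u$ the fourth-cumulant term becomes exactly $(\eta-3)\sigma^4\int_{\bR^d}f(u)f(u+l)f(u+\Delta_p)f(u+l+\Delta_q)\,\lambda^d(du)$. Each of the three pair-product terms is computed by \eqref{Ito}: the pairing $\{1,2\}\{3,4\}$ gives $\gamma_X(\Delta_p)\gamma_X(\Delta_q)$, the pairing $\{1,3\}\{2,4\}$ gives $\gamma_X(l)\gamma_X(l+\Delta_q-\Delta_p)$, and the pairing $\{1,4\}\{2,3\}$ gives $\gamma_X(l+\Delta_q)\gamma_X(l-\Delta_p)$. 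Since $\bE[X_0X_{\Delta_p}]\,\bE[X_lX_{l+\Delta_q}]=\gamma_X(\Delta_p)\gamma_X(\Delta_q)$, the $\{1,2\}\{3,4\}$ pairing is precisely what is removed when forming the covariance $d(l)$, so that $d(l)=T_l$.

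\textbf{Step 3 (passage to the limit).} Since $a_l^n\to a_l$ for each $l$ by assumption (c) of Theorem \ref{marc}, it remains to justify $\sum_l a_l^n T_l\to\sum_l a_l T_l$ by dominated convergence for series, i.e. to exhibit a summable majorant for $\sup_n a_l^n|T_l|$. The fourth-cumulant contribution is majorised by $|\eta-3|\sigma^4\sup_n a_l^n\int_{\bR^d}|f(u)f(u+l)f(u+\Delta_p)f(u+l+\Delta_q)|\,\lambda^d(du)$, which is summable in $l$ by the first hypothesis of Theorem \ref{sophie3} (with $\Delta_p,\Delta_q$ in the two shift roles) together with Tonelli. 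For the two $\gamma_X$-product terms I would use $|ab|\le\tfrac12(a^2+b^2)$ to reduce the majorant to sums $\sum_l\sup_n a_l^n\,\gamma_X(l+k)^2$ with $k\in\{0,\Delta_q-\Delta_p,\Delta_q,-\Delta_p\}$, the case $k=0$ being exactly the second hypothesis.

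\textbf{Expected main obstacle.} The delicate point, and the reason the computation is ``a little bit longer'', is the domination of the shifted squares $\sum_l\sup_n a_l^n\,\gamma_X(l+k)^2$ for the nonzero $k$, since the hypotheses only bound the unshifted weighted sum $\sum_l\sup_n a_l^n\,\gamma_X(l)^2$. Reindexing by $m=l+k$ turns this into $\sum_m\sup_n a^n_{m-k}\,\gamma_X(m)^2$, so one must compare the weights $\sup_n a^n_{m-k}$ and $\sup_n a^n_m$ across the finitely many fixed shifts $k$ generated by $\Delta_1,\dots,\Delta_m$. For a F{\o}lner sequence one has $\sup_n a_l^n\equiv1$, so the weighted condition reduces to $\gamma_X\in\ell^2(\bZ^d)$ and is manifestly shift-invariant; for the ergodic and $\alpha$-mixing sampling schemes of Lemma \ref{lemmaergodicsampling} the limiting weights $a_l$ stay bounded away from $0$ as $|l|\to\infty$, so the weights are comparable across the finite shifts and the comparison again goes through. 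With the majorant secured, dominated convergence for series gives $\sum_l a_l^n T_l\to\sum_l a_l T_l$, which is the assertion.
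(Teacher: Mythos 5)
Your proposal follows the paper's proof essentially verbatim: the paper likewise expands the covariance into $\frac{1}{|\Gamma_n|}\sum_{s,t\in\Gamma_n}\cov(X_tX_{t+\Delta_p},X_sX_{s+\Delta_q})$, uses stationarity and Proposition \ref{fourthmoment} to identify the summand with $T_{s-t}$, rewrites the double sum as $\sum_{l\in\bZ^d}a_l^nT_l$, and concludes by dominated convergence for series. The only substantive difference is that you explicitly discuss how to dominate the shifted terms $\gamma_X(l+k)^2$, a point the paper dismisses with ``by our assumptions''; your extra care there (even though the weight-comparison argument you sketch is only heuristic for general $\Gamma_n$) goes beyond, rather than against, the published argument.
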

\begin{proof}
A direct calculation gives us
\begin{align*}
|\Gamma_n|\cov (\gamma_n^{*}(\Delta_p),\gamma_n^{*}(\Delta_q))&=\frac{1}{|\Gamma_n|}\sum\limits_{s,t\in \Gamma_n}\cov(X_tX_{t+\Delta_p},X_sX_{s+\Delta_q})\\
&=\frac{1}{|\Gamma_n|}\sum\limits_{s,t\in \Gamma_n}\bE (X_tX_sX_{t+\Delta_p}X_{s+\Delta_q})-\gamma_X(\Delta_p)\gamma_X(\Delta_q)\\
&=\frac{1}{|\Gamma_n|}\sum\limits_{s,t\in \Gamma_n}\bE (X_0X_{s-t}X_{\Delta_p}X_{s-t+\Delta_q})-\gamma_X(\Delta_p)\gamma_X(\Delta_q)\\
&=\frac{1}{|\Gamma_n|}\sum\limits_{s,t\in \Gamma_n}T_{s-t},
\end{align*}
which follows from Proposition $\ref{fourthmoment}$, and we get that
\begin{align*}
\frac{1}{|\Gamma_n|}\sum\limits_{s,t\in \Gamma_n}T_{s-t}=\sum\limits_{l\in\bZ^d}a^{n}_lT_l.
\end{align*}
 By our assumptions and Lebesgue's dominated convergence theorem for series we conclude that
\begin{align*}
|\Gamma_n|\cov (\gamma_n^{*}(\Delta_p),\gamma_n^{*}(\Delta_q))\to \sum\limits_{l\in\bZ^d} a_l T_l \quad\textrm{for }n\to\infty.
\end{align*}
\end{proof}
\begin{proof}[Proof of Theorem \ref{sophie3}]
Let $h\in \bN$ and $X^{(h)}_t$ be given by
\begin{align*}
X^{(h)}_t:=\int\limits_{\bR^d} f^{(h)}(t-u)\,dL(u),
\end{align*}
where $f^{(h)}(u):=f(u)\one_{[-h,h)^d}(u)$. We define
\begin{align*}
U^{(h)}_t:=(X^{(h)}_tX_{t+\Delta_1}^{(h)},\dotso,X^{(h)}_tX^{(h)}_{t+\Delta_m}).
\end{align*}
Now observe that $(U^{(h)}_t)_{t\in\bZ^d}$ is $(2\sqrt{d}h+2\sup_{i=1,\dotso,m}\|\Delta_i\|+1)$-dependent. We want to show that
\begin{align}\label{kk}
\frac{1}{\sqrt{|\Gamma_n|}}\sum\limits_{t\in \Gamma_n}(U_t^{(h)}-(\gamma_{X^{(h)}}(\Delta_1),\dotso,\gamma_{X^{(h)}}(\Delta_m)))\stackrel{d}{\to}Y^{(h)}\stackrel{d}{=}N(0,V^{(h)})
\end{align}
as $n\to\infty$, where $V^{(h)}=(v^{(h)}_{pq})_{p,q\in\{1,\dotso,n\}}$ is defined by $(\ref{komplizierte Kovarianz})$ with $f$ replaced by $f^{(h)}$. 
Let $\alpha=(\alpha_1,\dotso,\alpha_m)\in\bR^m\setminus\{0\}$.  Define $K^{(h)}_t:=\alpha(U^{(h)}_t-(\gamma_{X^{(h)}}(\Delta_1),\dotso,\gamma_{X^{(h)}}(\Delta_m)))^T$, which is also $(2\sqrt{d}h+2\sup_{i=1,\dotso,m}\|\Delta_i\|+1)$-dependent. Then we see that $\bE K^{(h)}_t=0$ and
\begin{align*}
\frac{1}{|\Gamma_n|}\bE \left(\sum\limits_{t\in \Gamma_n} K^{(h)}_t\right)^2&=\frac{1}{|\Gamma_n|}\sum\limits_{t,s\in \Gamma_n}\bE\,K^{(h)}_tK^{(h)}_s\\
&=\frac{1}{|\Gamma_n|}\sum\limits_{t,s\in \Gamma_n}\bE (\alpha (U^{(h)}_t-(\gamma_{X^{(h)}}(\Delta_1),\dotso,\gamma_{X^{(h)}}(\Delta_m)))^T\\&\qquad\qquad\qquad\alpha ( (U^{(h)}_s-(\gamma_{X^{(h)}}(\Delta_1),\dotso,\gamma_{X^{(h)}}(\Delta_m)))^T)\\
&=\frac{1}{|\Gamma_n|}\sum\limits_{t,s\in \Gamma_n}\bE\sum\limits_{i,j=1}^m \alpha_i\alpha_j (X^{(h)}_tX^{(h)}_{t+\Delta_i}-\gamma_{X^{(h)}}(\Delta_i))(X^{(h)}_sX^{(h)}_{s+\Delta_j}-\gamma_{X^{(h)}}(\Delta_j))\\
&=\frac{1}{|\Gamma_n|}\sum\limits_{t,s\in \Gamma_n}\sum\limits_{i,j=1}^m \alpha_i\alpha_j \cov(X^{(h)}_tX^{(h)}_{t+\Delta_i},X^{(h)}_sX_{s+\Delta_j}^{(h)}).
\end{align*}
By Proposition $\ref{es}$ we conclude that
\begin{align*}
\frac{1}{|\Gamma_n|}\bE \left(\sum\limits_{t\in \Gamma_n} K^{(h)}_t\right)^2\to \sum\limits_{i,j=1}^m  \alpha_i\alpha_j v^{(h)}_{ij}
\end{align*}
for $n\to \infty$. Furthermore, for every $\varepsilon>0$ we have
\begin{align*}
&\lim\limits_{n\to\infty} \frac{1}{|\Gamma_n|}\sum\limits_{t\in \Gamma_n}\bE (K^{(h)}_t)^2\one_{|K^{(h)}_t|\ge |\Gamma_n| \varepsilon}\\
=&\lim\limits_{n\to\infty}\bE (K^{(h)}_0)^2\one_{|K^{(h)}_0|\ge |\Gamma_n| \varepsilon}=0
\end{align*}
and
\begin{align*}
\frac{1}{|\Gamma_n|}\sum\limits_{t\in \Gamma_n}\bE (K^{(h)}_t)^2=\bE (K^{(h)}_0)^2<\infty.
\end{align*}
By  [$\ref{Heinrich}$, Theorem 2, p. 135] we conclude that
\begin{align*}
\frac{1}{\sqrt{|\Gamma_n|}}\sum\limits_{t\in \Gamma_n}K^{(h)}_t\stackrel{d}{\to}N(0,\sum\limits_{i,j=1}^m\alpha_i\alpha_j v^{(h)}_{ij}),\, n\to\infty.
\end{align*}
By the Cr\'{a}mer-Wold Theorem we see that $(\ref{kk})$ holds true. Next we have to show that $V^{(h)}\to V$ for $h\to\infty$. But this follows from dominated convergence, since $f^{(h)}\to f$ in $L^4(\bR^d)$ and in $L^2(\bR^d)$ as $h\to\infty$, since $|f^{(h)}|\le |g|$ and by $(\ref{Ito})$.
Hence we get
\begin{align*}
Y^{(h)}\stackrel{d}{\to} Y\sim N(0,V)\textrm{ as }h\to\infty.
\end{align*}
The claim will now follow by  [\ref{Brockwell}, Proposition 6.3.9, pp. 207-208] if we can show that for any $\varepsilon>0$,
\begin{align}\label{0}
\lim\limits_{h\to\infty}\lim\limits_{n\to\infty}P\left(\sqrt{|\Gamma_n|}\left|\gamma_n^{*}(\Delta_i)-\gamma_X(\Delta_i)-\frac{1}{|\Gamma_n|}\sum\limits_{t\in \Gamma_n}X^{(h)}_tX^{(h)}_{t+\Delta_i}+\gamma_{X^{(h)}}(\Delta_i)\right|>\varepsilon\right)=0.
\end{align}
This follows by showing that
\begin{align}\label{0}
\lim\limits_{h\to\infty}\lim\limits_{n\to\infty}\mathbb{E}{|\Gamma_n|}\left|\gamma_n^{*}(\Delta_i)-\gamma_X(\Delta_i)-\frac{1}{|\Gamma_n|}\sum\limits_{t\in \Gamma_n}X^{(h)}_tX^{(h)}_{t+\Delta_i}+\gamma_{X^{(h)}}(\Delta_i)\right|^2=0.
\end{align}
as an application of the Dominated convergence Theorem similar to the end of the proof of [\ref{Lindner}, Theorem 3.5, p. 1302] and therefore we obtain our desired result.
\end{proof}
\begin{proof}[Proof of Theorem \ref{extra1}]
We observe that $$\sum_{t\in \Gamma_n}(X_tX_{t+\Delta_i}-\gamma_X(\Delta_i))=\sum_{t\in [-n,n)^d\cap  \bZ^d}Y_{t}(X_tX_{t+\Delta_i}-\gamma_X(\Delta_i))$$ and 
\begin{align*}
&\cov(Y_t(X_t^{(h)}X_{t+\Delta_i}^{(h)}-\gamma_{X^{(h)}}(\Delta_i)),Y_s (X_s^{(h)}X_{s+\Delta_j}^{(h)}-\gamma_{X^{(h)}}(\Delta_j)))\\
=& \bE Y_t(X_t^{(h)}X_{t+\Delta_i}^{(h)}-\gamma_{X^{(h)}}(\Delta_i))Y_s (X_s^{(h)}X_{s+\Delta_j}^{(h)}-\gamma_{X^{(h)}}(\Delta_j)\\
&-\bE Y_t(X_t^{(h)}X_{t+\Delta_i}^{(h)}-\gamma_{X^{(h)}}(\Delta_i))\bE Y_s (X_s^{(h)}X_{s+\Delta_j}^{(h)}-\gamma_{X^{(h)}}(\Delta_j))\\
=&\bE Y_tY_s\bE(X_t^{(h)}X_{t+\Delta_i}^{(h)}-\gamma_{X^{(h)}}(\Delta_i))(X_s^{(h)}X_{s+\Delta_j}^{(h)}-\gamma_{X^{(h)}}(\Delta_j).
\end{align*}
Repeating the same steps as in the proof of Theorem \ref{extra} gives the claim.
\end{proof}
\section*{Acknowledgement:} Partial support by DFG grant LI 1026/6-1 is gratefully acknowledged. The author would like to thank Alexander Lindner for introducing him to this topic, giving him the opportunity to work on it and for many interesting and fruitful discussions. Furthermore, the author would like to thank the editor and the anonymous referee for their careful reading and valuable comments which improved the exposition of the paper.

\vspace{1cm}
David Berger\\
 Ulm University, Institute of Mathematical Finance, Helmholtzstra{\ss}e 18, 89081 Ulm,
Germany\\
email: david.berger@uni-ulm.de
\end{document}